\def\th@plain{%
\thm@notefont{\bfseries}
   \itshape 
} 
\theoremstyle{plain}
\newtheorem{theorem}{Theorem}[section]
\newtheorem{lemma}[theorem]{Lemma}
\newtheorem{corollary}[theorem]{Corollary}
\newtheorem{proposition}[theorem]{Proposition}
\theoremstyle{definition}
\newtheorem{definition}[theorem]{Definition}
\theoremstyle{remark}
\newtheorem{remark}[theorem]{Remark}
\newtheorem{example}[theorem]{Example}
\numberwithin{equation}{section}
\newcommand{\res}{\mathop{\hbox{\vrule height 7pt width .5pt depth 0pt
\vrule height .5pt width 6pt depth 0pt}}\nolimits}
\newcommand{\per}[2]{|\partial {#1}|_{\mathbb G}({#2})}
\newcommand{\Per}[1]{|\partial {#1}|_{\mathbb G}}
\newcommand{\scal}[3]{\langle
{#1} , {#2}\rangle_{#3}}
\newcommand{\Scal}[2]{\langle {#1} , {#2}\rangle}
\newcommand{\average}{{\mathchoice {\kern1ex\vcenter{\hrule height.4pt
width 6pt depth0pt} \kern-9.7pt} {\kern1ex\vcenter{\hrule height.4pt width
4.3pt depth0pt}
\kern-7pt} {} {} }}
\newcommand{\ave}{\average\int}
\begin{document}

\title[Constant intrinsic normal sets in a class of Carnot groups]{Regularity of sets with constant intrinsic normal in a class of Carnot groups}
\author{Marco Marchi}
\address{Marco Marchi: Dipartimento di Matematica (Universit\`a degli Studi di Milano) \newline \indent
via Cesare Saldini 50\\ 20133 Milano MI\\ Italy} \email{marco.marchi@unimi.it}
\thanks{It is a pleasure to thank Bruno Franchi for his invaluable constant support and for the many hours he spent talking with the author, and Alessandro Ottazzi for several helpful discussions on Carnot groups of type $\star$. The author also thanks the referee for his useful comments which helped improving the presentation of the paper.}
\date{\today}

\begin{abstract}
In this Note, we define a class of stratified Lie groups of arbitrary step (that are called ``groups of type $\star$''
throughout the paper), and we prove that, in these groups, sets with constant intrinsic normal are vertical halfspaces. As a consequence, the reduced boundary of a set of finite intrinsic perimeter
in a group of type $\star$ is rectifiable in the intrinsic sense (De Giorgi's rectifiability theorem). This result extends 
the previous one proved by Franchi, Serapioni \& Serra Cassano in step 2 groups.
\end{abstract}
\subjclass[2010]{28A75,  49Q15, 58C35}

\maketitle
\tableofcontents

\section{Introduction}
This article is intended to introduce a peculiar class of nilpotent Lie groups, whose Lie algebra enjoys a particular property giving several interesting algebraic and geometric consequences.
We call these groups \textquotedblleft of type $\star$\textquotedblright. As long as we know, these groups are considered for the first time in the present paper. Before entering into the details let us spend a few words to illustrate the motivation of our work. 

In fact our initial intent was to extend to a larger class of stratified nilpotent Lie groups (usually called Carnot groups nowadays) De Giorgi's Rectifiability Theorem following the original De Giorgi's approach (\cite{degiorgi}) that was partially extended to some Carnot groups by  \cite{K1} and \cite{newambro}. Given a set $E \subset \mathbb R^n$ of finite perimeter, De Giorgi's proof relies on the fact that the perimeter measure is concentrated in a subset of the topological boundary of $E$, the so-called reduced boundary $\partial^\ast E$, and that at any point of $\partial^\ast E$ the blowup of $E$ is a set with a constant normal and therefore, \textsl{in the Euclidean setting}, is a halfspace.   The notions of perimeter and of  reduced boundary, as well as that of the normal have a natural counterpart in Carnot groups. This notions depend on the stratification of the Lie algebra and will be presented in details in Section \ref{constantnormal}. From now on we refer to this notions as to intrinsic perimeter, intrinsic reduced boundary and intrinsic normal.

Again the blowup of a set $E$ at a point of its reduced boundary is a set with constant intrinsic normal. However sets with constant intrinsic normal in general Carnot groups may fail to be halfspaces. In fact this result still holds in step 2 Carnot groups as it is proved in \cite{K1}, but there are counterexamples in groups of step 3 as shown in Example 3.2 of \cite{K1}. On the other hand,
in Carnot groups of arbitrary step,
 only a partial result is known, that has been proved in \cite{newambro} by Ambrosio,
 Kleiner \& Le Donne. 
In fact, the authors show that, given a set $E$ of locally finite perimeter in a Carnot group $\mathbb G$, 
then for almost every $x \in \mathbb G$ (with respect to the perimeter measure of $E$), some 
blow-up of $E$ at $x$ is a vertical halfspace.
The main result of the present paper holds precisely that constant intrinsic normal sets in Carnot groups of type $\star$ are halfspaces.
Thus in particular De Giorgi's Rectifiability Theorem holds in groups of type $\star$.

Let us sketch now the basic points of our paper.
Detailed definitions are given below: here we restrict ourself to
remind that a nilpotent, simply connected Lie group $\mathbb G$
is called a Carnot group if its Lie algebra ${\mathfrak{g}}$ admits the stratification
${\mathfrak{g}}=V_1\oplus \dots \oplus V_\kappa$, with
 $[V_1,V_i]=V_{i+1}$, where $[V_1,V_i]$ is the subspace of ${\mathfrak{g}}$ generated by 
 commutators $[X,Y]$ with $X\in V_1$ and $Y\in V_i$. The integer $\kappa$ is said the
 step of the group.

$\mathbb G$ is of type $\star$ if there exists a basis $(X_{1},\ldots,X_{m_{1}})$ of $V_{1}$
such that \begin{equation*}[X_{j},[X_{j},X_{i}]]=0 \, \mbox{ for } \, i,j=1,\ldots,m_{1}. \end{equation*}

Obviously groups of step 2 are of type $\star$. 
The most important examples of groups of type $\star$ are the nilpotent groups coming from the Iwasawa decomposition of $GL_{\kappa+1}(\mathbb R)$, i.e. the group of unit upper triangular matrices with 1's in the diagonal. In particular, there are groups of type $\star$ with arbitrarily large step. On the other hand there are Carnot groups that are not of type $\star$, i.e. free Carnot groups and filiform groups of step greater than 2.

In Section \ref{groupsstar} we define groups of type $\star$ and show some examples of them.
In Section \ref{constantnormal} we define the notions of intrinsic perimeter and intrinsic normal and we prove that, in such groups, sets with constant intrinsic normal are halfspaces (Theorem \ref{constant_halfspace}).
In Section \ref{geometricmotivation} we recall some results about rectifiability in Carnot groups from \cite{K1} and we enunciate the new results in groups of type $\star$ that follows straightforwardly from Theorem \ref{constant_halfspace}: Theorem \ref{blowup_new}, Theorem \ref{principale}, Theorem \ref{francesco3nuovo}, Corollary \ref{Balognuovo}.
In Section \ref{propriet} we provide some necessary conditions for a group to be of type $\star$ and some additional examples of groups in which the blowup of a set (at a point of its reduced boundary) is not always a halfspace.

\medskip

\section{Carnot groups of type $\star$} \label{groupsstar}
First we recall some preliminary definitions on Carnot groups.
For more details, one can see \cite{lanco_bonfi_ugu} and \cite{K1}.

\begin{definition}[Carnot group]
A Carnot group $\mathbb{G}$ of step $\kappa$ is a nilpotent connected and simply connected Lie group, whose Lie algebra
$\mathfrak{g}$ admit a step $\kappa$ stratification, i.e. there exist linear subspaces $V_1,\ldots,V_\kappa$ such that
\begin{equation}\label{stratificazione}
{\mathfrak{g}}=V_1\oplus \ldots \oplus V_\kappa,\quad [V_1,V_i]=V_{i+1},\quad
V_\kappa\neq\{0\},\quad V_i=\{0\}\;\textrm{ if } i>\kappa,
\end{equation}
where $[V_1,V_i]$ is the subspace of ${\mathfrak{g}}$ generated by commutators $[X,Y]$ with $X\in V_1$ and $Y\in V_i$. 
\end{definition}
We set
$m_i:=\dim(V_i)$ for $i=1,\dots,\kappa$. Obviously $\sum_{i=1}^\kappa m_i=n$, where $n$ is the dimension of $\mathfrak g$.

\bigskip

The left invariant vector fields that form a basis of $V_1$ are called generating vector fields of the group, since they generate $\mathfrak{g}$ 
for (\ref{stratificazione}). The choice of a basis $(X_1,\ldots,X_{m_1})$ for $V_1$ also fixes an inner product $\langle \cdot,\cdot\rangle$ on $V_1$ that makes $(\! X_1,\ldots,X_{m_1}\! )$ an orthonormal basis.
Precisely, if $Y=\sum_{j=1}^{m_1} y_j X_j$ and $Z=\sum_{j=1}^{m_1} z_j X_j$, then 
\[ \langle Y,Z \rangle :=\sum_{j=1}^{m_1} y_j z_j. \]
It is possible to chose a basis $(X_1,\ldots,X_n)$ for $\mathfrak g$ that is adapted to its stratification, that is
$ (X_{h_{j-1}+1},\ldots,X_{h_j})$  is a basis of  $V_j$  for  $1 \leq j \leq \kappa$, where   $h_j=\sum_{i=1}^j m_i$. 

Since Carnot groups are nilpotent, connected and simply connected, the exponential map is a diffeomorphism from $\mathfrak{
g}$ to $\mathbb{G}$, i.e. every $p \in \mathbb{G}$ can be uniquely written in the form
\begin{equation}p=\exp(p_1X_1+\dots+p_nX_n).\label{esp}\end{equation} By using this exponential coordinates, we can identify $p$ with the $n$-tuple $$(p_1,\dots,p_n)\in
\mathbb{R}^n$$ and identify $\mathbb{G}$ with $(\mathbb{R}^n,\cdot)$ where the explicit expression of the group operation $\cdot$ is determined by
Campbell-Hausdorff formula (see \cite{folland}). More precisely, $\mathbb G$ is isomorphic to the Lie group $(\mathbb{R}^n,\cdot)$.

\bigskip

\begin{definition}[Horizontal bundle]
A Carnot group is characterized by a horizontal bundle $H\mathbb{G}$, whose fibers are    \[ H\mathbb{G}_x=\{Y(x) \,|\, Y \in V_1\},\qquad x\in\mathbb{G} .\]
Moreover, the inner product $\langle \cdot,\cdot\rangle$ defined on $V_1$ induces an inner product $\langle \cdot,\cdot\rangle_x$ and a norm $|\cdot|_x$ on $H\mathbb G_x$ for every $x \in \mathbb G$; precisely,
\[ \scal{Y(x)}{Z(x)}{x}:=\langle Y,Z \rangle \; \textrm{ and } \; 
|Y(x)|_x^2:=\langle Y,Y\rangle.\]

The sections of $H\mathbb G$ are called horizontal
sections, a vector of $H\mathbb G_x$ is a horizontal vector.
Every horizontal section $\phi$ defined on an open set $\Omega \subset \mathbb G$ can be written as $\phi=\sum_{i=1}^{m_1} \phi_i X_i$, where its coordinates are functions $\phi_i: \Omega \rightarrow \mathbb R$. When considering two such sections
$\phi$ and $\psi$, we will write $\Scal
{\psi}{\phi}$ for $\scal {\psi(x)}{\phi(x)}{x}$; however the dependence on $x$ remains except for left invariant horizontal sections, i.e. vector fields belonging to $V_1$.
\end{definition}

\begin{definition}[Left translations and dilations]
 For each $x\in\mathbb G$, we define the left translation by $x$ as 
\begin{align*}
\tau_x:\,\mathbb G &\to\mathbb G \\
 z &\mapsto x\cdot z
\end{align*}
and, for each $\lambda >0$, we define the dilation $\delta_\lambda:\mathbb G\to\mathbb G$ as 
\begin{equation}\label{dilatazioni}
\delta_\lambda(x_1,...,x_n)=
(\lambda^{\alpha_1}x_1,...,\lambda^{\alpha_n}x_n),
\end{equation} where $\alpha_i\in\mathbb N$ is the homogeneity of variable $x_i$ in
$\mathbb G$  and it is defined as
\begin{equation}\label{omogeneita2}
\alpha_i=j \quad\text {when}\; 1+\sum_{k=1}^{j-1}m_k\leq i\leq \sum_{k=1}^j m_k \,.
\end{equation}

\end{definition}

Now we recall some definitions about metrics and measures  on Carnot groups. 
\begin{definition}[Sub-unit curve]
An absolutely continuous curve $\gamma:[0,T]\to \mathbb G$ is a sub-unit curve with respect to  $X_1,\dots,X_{m_1}$ if it is a horizontal curve, i.e. there exist measurable real functions $c_1(s),\dots,c_{m_1}(s)$, $s\in [0,T]$ such that
$$\dot\gamma(s)=\sum\limits_{j=1}^{m_1}\,c_j(s) X_j(\gamma(s)),
\quad \text{for a.e.}\;s\in [0,T],$$ and if $$\sum_jc_j^2\le 1.$$
\end{definition}

\begin{definition}[Carnot-Carath\'eodory distance] \label{distanza}
The Carnot-Carath\'eodory distance between two points $p$, $q\in\mathbb G$ is defined as \[
d_c(p,q):=\inf\left\{T>0:\;\text{there exists a sub-unit curve}\;
\gamma\;\text{with}\; \gamma(0)=p,\,\gamma(T)=q\right\}. \]
\end{definition}
The set of sub-unit curves joining $p$ and $q$ is not empty, since Chow's Theorem (see \cite{subriemann}, Theorem 1.6.2); furthermore  $d_c$
is a distance on $\mathbb G$ that induces the Euclidean topology (see \cite{ballmetrics}). We denote with $U_c(p,r)$ and $B_c(p,r)$
respectively the open and closed balls associated with $d_c$.
\bigskip

\begin{definition}[$d_\infty$ distance]
\cite{K1} defined another distance equivalent to the previous one.  $$d_{\infty}(x,y)=
d_{\infty}(y^{-1}\cdot x,0),$$ where, if $p=(\tilde p_1,\dots,\tilde p_k)\in \mathbb R^{m_1}\times\cdots\times\mathbb R^{m_k}=\mathbb R^n$, then
\begin{equation}\label{distanzainfinito}
d_{\infty}(p,0)=\max\{\varepsilon_j \vert\!\vert \tilde p_j \vert\!\vert_{\mathbb R^{m_j}}^{1/j}\, ,
j=1,\dots,k\}. \end{equation} Here $\varepsilon_1=1$, and
$\varepsilon_{2},\dots\varepsilon_{k} \in (0,1)$ are suitable positive constants depending on the group structure (see \cite{K1}, Theorem 5.1).
\end{definition}

\begin{definition}[Homogeneous dimension]
The integer
\begin{equation}\label{dimensione}
Q=\sum_{j=1}^n\alpha_j=\sum_{i=1}^k i\,\text{dim}\,V_i
\end{equation}
is the homogeneous dimension of $\mathbb G$. We stress that it is also the
Hausdorff dimension of $\mathbb R^n$ with respect to $d_c$ (see
\cite{Mi}).
\end{definition}

\begin{proposition}[Haar measure]
The $n$-dimensional Lebesgue measure  $\mathcal L^n$ is the Haar measure of the group $\mathbb G$ (see \cite{varsalcoul}). Therefore if $E\subset\mathbb R^n$ is measurable,
then $ \mathcal L^n(x\cdot E)= \mathcal L^n(E)$ for every $x\in\mathbb G$.
Moreover, if $\lambda>0$ then $\mathcal
L^n(\delta_\lambda(E))=\lambda^Q \mathcal L^n(E)$. We note that \begin{equation}\label{volumepalle} \mathcal
L^n(U_c(p,r))=r^Q \mathcal L^n(U_c(p,1))=r^Q \mathcal
L^n(U_c(0,1)).
\end{equation}
\end{proposition}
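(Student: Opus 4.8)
The plan is to read off all the assertions from the polynomial structure of the group law and of the dilations in exponential coordinates, and then to apply the change of variables formula. First I would recall that, by the Campbell--Hausdorff formula (\cite{folland}), in the exponential coordinates \eqref{esp} the product is polynomial and, since the bracket strictly increases the layer degree in the stratification \eqref{stratificazione}, the component of $x\cdot y$ in the $i$-th variable has the form $(x\cdot y)_i = x_i + y_i + \mathcal Q_i(x,y)$, where $\mathcal Q_i$ depends only on those coordinates $x_j,y_j$ with $\alpha_j<\alpha_i$. Hence, ordering the coordinates according to the homogeneities $\alpha_1\le\dots\le\alpha_n$, the differential of the left translation $\tau_x$ at any point is block triangular with the identity on the diagonal; in particular $\det(d\tau_x)\equiv 1$. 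By the change of variables formula this yields $\mathcal L^n(x\cdot E)=\mathcal L^n(E)$ for every measurable $E$ and every $x\in\mathbb G$, so $\mathcal L^n$ is a left-invariant Radon measure, positive and finite on nonempty bounded open sets, hence — up to the usual normalization — it is the Haar measure of $\mathbb G$; see also \cite{varsalcoul}.

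Next I would treat the dilations. The map $\delta_\lambda$ in \eqref{dilatazioni} is linear with diagonal matrix $\mathrm{diag}(\lambda^{\alpha_1},\dots,\lambda^{\alpha_n})$, so its Jacobian determinant is $\prod_{j=1}^n\lambda^{\alpha_j}=\lambda^{\sum_j\alpha_j}=\lambda^Q$ by the very definition \eqref{dimensione} of the homogeneous dimension. Again by the change of variables formula, $\mathcal L^n(\delta_\lambda(E))=\lambda^Q\,\mathcal L^n(E)$ for every measurable $E$.

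Finally, for \eqref{volumepalle} I would use the interplay of $d_c$ with translations and dilations. Since the vector fields $X_1,\dots,X_{m_1}$ are left-invariant, the class of sub-unit curves is preserved by every $\tau_x$, so $d_c$ is left-invariant and $U_c(p,r)=p\cdot U_c(0,r)$. Moreover, since $(\delta_\lambda)_*X_j=\lambda X_j$ for $j=1,\dots,m_1$, a time-rescaling of the image under $\delta_\lambda$ of a sub-unit curve is again sub-unit, which gives $d_c(\delta_\lambda p,\delta_\lambda q)=\lambda\, d_c(p,q)$ and hence $U_c(0,r)=\delta_r(U_c(0,1))$. Combining these two identities with the previous two displays,
\begin{equation*}
\mathcal L^n(U_c(p,r))=\mathcal L^n\big(p\cdot\delta_r(U_c(0,1))\big)=\mathcal L^n\big(\delta_r(U_c(0,1))\big)=r^Q\,\mathcal L^n(U_c(0,1)),
\end{equation*}
and the middle equality $\mathcal L^n(U_c(p,1))=\mathcal L^n(U_c(0,1))$ in \eqref{volumepalle} is again left-invariance.

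The only point requiring a little care — rather than the routine determinant computations — is the behaviour of $d_c$ under dilations: one must check that $\delta_\lambda$ rescales the horizontal frame exactly by the factor $\lambda$ (a consequence of $X_j\in V_1$ being $\delta$-homogeneous of degree one) and that reparametrizing in time turns the image of a sub-unit curve back into a sub-unit curve. Everything else is a direct application of the change of variables formula together with the polynomial form of the group law coming from Campbell--Hausdorff.
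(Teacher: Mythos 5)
Your proof is correct. Note that the paper does not actually prove this proposition: it is stated as a known fact with a reference to \cite{varsalcoul}, so there is no internal argument to compare against. What you wrote is exactly the standard proof behind that citation — the Campbell--Hausdorff expansion makes $\tau_x$ unipotent triangular in exponential coordinates (unit Jacobian, hence left invariance and the Haar property by uniqueness), $\delta_\lambda$ is diagonal with Jacobian $\lambda^{\sum_j\alpha_j}=\lambda^Q$, and the left invariance plus $1$-homogeneity of $d_c$ give $U_c(p,r)=p\cdot\delta_r\bigl(U_c(0,1)\bigr)$, from which \eqref{volumepalle} follows; your care about why $\mathcal Q_i$ involves only variables of strictly lower homogeneity and why $\delta_\lambda$ rescales sub-unit curves is exactly where the (routine) work lies.
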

In our paper all the spaces $L^p(\mathbb G)$ are defined with respect to the Haar measure of $\mathbb G$.

\bigskip

Now we are ready to define the class of Carnot groups of type $\star.$
From now on we set $m:=m_1$.

\begin{definition}[Carnot groups of type $\star$]\label{mm}
We say that a Carnot group $\mathbb G$ is of type $\star$ if its stratified Lie algebra ${\mathfrak{g}}=V_1\oplus \dots \oplus V_\kappa$  has the following property: there exists a basis $(X_{1},\ldots,X_{m})$ of $V_{1}$
such that \begin{equation}[X_{j},[X_{j},X_{i}]]=0 \, \mbox{ for } \, i,j=1,\ldots,m \label{eq:norip}\end{equation}

In this case we also say that $\mathfrak{g}$ is of type $\star$.
\end{definition}

\begin{remark}
  It is clear that every step 2 Carnot group is of type $\star$, whereas free Carnot groups of step greater than 2 are not of type $\star$.
Moreover, if a Carnot group of step greater than 2 is of type $\star$, then the dimension of its first layer is at least 3; hence filiform groups of step greater than 2 (and in particular Engel group) are not of type $\star$. For details about the notions of free and filiform Carnot groups see \cite{lanco_bonfi_ugu}.

We do not require the validity of (\ref{eq:norip}) for every basis of $V_1$, since that would be equivalent to require that the step is 2. The proof is quite straightforward.
Suppose   $ [Y_{1},[Y_{1},Y_{2}]]=0$ for every $ Y_{1},Y_{2} \in V_1$. If $X,Y,Z \in V_1$ then    \begin{align*} 0&=[X+Y,[X+Y,Z]]=[X,[Y,Z]]+[Y,[X,Z]] \\  0&=[X+Z,[X+Z,Y]]=[X,[Z,Y]]+[Z,[X,Y]]=-2[X,[Y,Z]]+[Y,[X,Z]]. \end{align*}  Therefore $[X,[Y,Z]]=0$  for all $X,Y,Z \in V_1$. 
 \end{remark}
\begin{example}\label{unitupper}
The Lie group $\mathbb G_m$ of unit upper triangular $(m+1) \times (m+1)$ matrices  is a Carnot group of type $\star$, for any $m \in \mathbb N$. However, in order to avoid trivial cases, it is possible to assume $m>2$. This group is the nilpotent group that comes from the Iwasawa decomposition of $GL_{m+1}(\mathbb R)$.

\medskip

Now let us prove that $\mathbb G_m$ is of type $\star$ for $m>2$.
The Lie algebra $\mathfrak{g}_m$ of $\mathbb G_m$ is isomorphic to the one of strictly upper triangular $(m+1) \times (m+1)$ matrices (see \cite{unitupperbook}, Part I, Chapter 2, Section 5.7, Example 1).
If $E_{i,j}$ is the matrix with 1 in the $(i,j)$-th entry and 0 elsewhere, it is easy to see that a basis of $\mathfrak{g}_m$ is formed by the single-entry matrices $E_{k,k+l}$ for   $l=1,\ldots,m$ and $k=1,\ldots,m+1-l$, and $\mathrm{dim}\,\mathfrak{g}_m=\frac{m(m+1)}{2}$.
The choice of using the particular parameters $k$ and $l$ will soon be explained. 

The following formula, which can be proven by direct computation of the commutators of single-entry matrices, gives the expression of Lie brackets in $\mathfrak{g}_m$.   \begin{equation}\label{bracketslaw} [E_{k_1,k_1+l_1},E_{k_2,k_2+l_2}]=\begin{cases}E_{k_{1},k_1+(l_1+l_2)} & \mbox{if }k_{1}<k_{2} \mbox{ and } k_1+l_1 = k_2 \\
 -E_{k_{2},k_2+(l_1+l_2)} & \mbox{if } k_1>k_2 \mbox{ and } k_2+l_2=k_1\\
0 & \mbox{otherwise}. \\
\end{cases}
\end{equation}
From (\ref{bracketslaw}), it is easy to see that $E_{k,k+1}$ (for $k=1,\ldots,m$) are generators of $\mathfrak{g}_m$. 
Moreover, $\mathfrak{g}_m=V_1\oplus\cdots\oplus V_m$ with $$V_l=\mathrm{span}\{ E_{k,k+l} \; | \; k=1,\ldots,m+1-l \}$$ for $l=1,\ldots,m$. This explains the use of the parameters $k$ and $l$.
Moreover, we observe that $m$ is the dimension of $V_1$ and the step of the stratification.

Now we can finally prove that $\mathbb G_m$ is of type $\star$.
We set $X_k:=E_{k,k+1}$ for $k=1,\ldots,m$.
From (\ref{bracketslaw}) we obtain that $E_{k,k+2}=[X_k,X_{k+1}]$ for $k=1,\ldots,m-1$ and the other independent commutators of length 2 are zero, whereas $E_{k,k+3}=[[X_k,X_{k+1}],X_{k+2}]=[X_k,[X_{k+1},X_{k+2}]]$ for $k=1,\ldots,m-2$ and the other independent commutators of length $3$ are zero. Hence (\ref{eq:norip}) holds.

\medskip

The identity (\ref{bracketslaw}) allows to explicitly write an adapted basis of $\mathfrak{g}_m$ and the expression of Lie brackets for any $m \in \mathbb N$, but we also want to remark the trivial case obtained when $m=2$, that is the Lie algebra of the three-dimensional Heisenberg group.
\end{example}
\begin{example}
Another example of stratified algebra of type $\star$ (besides $\mathfrak{g}_m$) is obtained from
$$\mathfrak{g}_3=\mathrm{span}\{X_1,X_2,X_3 \}\oplus\mathrm{span}\{[X_1,X_2],[X_2,X_3] \}\oplus\mathrm{span}\{[[X_1,X_2],X_3]\},$$
where $ [X_1,X_3]=0, \, [[X_1,X_2],X_3]=[X_1,[X_2,X_3]]$ and the other commutators of length $3$ are zero, by setting $[X_1,X_3]=b[X_2,X_3]$ with $ b \neq 0$.
This can be done, since Jacobi identity 
$$[[X_1,X_2],X_3]+[[X_2,X_3],X_1]+[[X_3,X_1],X_2]=0$$
is still verified.
The only non-zero commutators of length 3 are
$$[[X_1,X_2],X_3]\!=\![X_1,[X_2,X_3]]\!=\![[X_3,X_2],X_1] \textrm{ and } [[X_3,X_1],X_1]\!=\!b[[X_1,X_2],X_3].$$ 
 By changing the basis of the first layer in the following way
 \begin{displaymath}
\begin{cases}\tilde X_1=X_1-bX_2  \\
 \tilde X_2=X_2 \\
 \tilde X_3=X_3 
\end{cases}
\end{displaymath}
it is easy to see that $\mathfrak{g}$ is of type $\star$.
\end{example}
\begin{definition}
An ideal of a graded Lie algebra is said to be homogeneous if can be generated by homogeneous elements.
\end{definition}
 Obviously, the quotient of a stratified Lie algebra by an homogeneous ideal is still a stratified Lie algebra. 
\begin{remark}
A stratified Lie algebra of type $\star$ can contain filiform stratified subalgebras of step greater than 2. 

For instance, let us consider the free Lie algebra of step 3 with 3 generators. Now we quotient it by the homogeneous ideal generated by $[X_{j},[X_{j},X_{i}]]$ with $i,j=1,2,3$. Obviously the obtained stratified Lie algebra is of type $\star$. If we consider its stratified subalgebra
 $\text{Lie}\{X_1+X_2,X_3   \} $, we can verify it is filiform of step 3.
 \begin{align*}
 [X_1+X_2,X_3]&=[X_1,X_3]+[X_2,X_3]\neq 0 \\ [X_1+X_2,[X_1+X_2,X_3]]&=[X_2,[X_1,X_3]]+[X_1,[X_2,X_3]]\neq 0\\  [X_3,[X_1+X_2,X_3]]&=0.
 \end{align*}
 Thus, type $\star$ property is not inherited by stratified subalgebras, but is obviously inherited by stratified quotient algebras.       
\end{remark}

\begin{remark}\label{dimensionreason}
In a stratified algebra, $\mathrm{dim}\,V_3 \leq \frac{1}{3}(m+1)m(m-1)$ and $\mathrm{dim}\,V_3 = \frac{1}{3}(m+1)m(m-1)$ in free stratified algebras of step greater than 2.
In fact, there are $m(m-1)$ commutators of form $[X_j,[X_j,X_i]]$ with $j \neq i$, $\binom{m}{3}$ commutators of form $[X_i,[X_j,X_k]]$ with $i<j<k$ and
$\binom{m}{3}$ commutators of form $[X_k,[X_j,X_i]]$ with $i<j<k$, which span $V_3$ and are linearly independent, if we consider only  the relations of antisymmetry and Jacobi identities. If we sum the number of these commutators, we get $$m(m-1)+2 \binom{m}{3}=\frac{1}{3}(m+1)m(m-1).$$

On the contrary, in a stratified Lie algebra of type $\star$, $\mathrm{dim}\,V_3 \leq \frac{1}{3}m(m-1)(m-2)$.

\medskip

For a dimension reason we can say again that free stratified algebras of step greater than 2 are not of type $\star$ and that stratified algebras of type $\star$ with $m=2$ cannot be of step greater than $2$, hence filiform algebras of step greater than 2 are not of type $\star$.
\end{remark}

\section{Sets with constant intrinsic normal} \label{constantnormal}
In this section we define the intrinsic perimeter measure, the intrinsic normal and we show the main result of our paper: Theorem \ref{constant_halfspace}.

\begin{definition}[Sets of locally finite perimeter]
A measurable set $E \subset \mathbb G$ has locally finite perimeter (or is a $\mathbb G$-Caccioppoli set) if $X \mathbbm{1}_E$ is a Radon measure on $\mathbb G$ for any $X \in V_1$.
\end{definition}

\begin{definition}[Perimeter measure and generalized normal]
If $f \in L^1_\text{loc}(\mathbb G)$ with $X_i f$ Radon measures for $1 \leq i \leq m$, it is possible to define the horizontal gradient of $f$ as the $\mathbb R^m$-valued Radon measure
\[  \nabla_\mathbb G f := (X_1 f, \ldots , X_m f ) . \]
If $E$ is a $\mathbb G$-Caccioppoli set, the total variation $|\nabla_\mathbb G \mathbbm{1}_E|$ of $\nabla_\mathbb G \mathbbm{1}_E$ is the perimeter measure of $E$ in $\mathbb G$ and we will denote it with $|\partial E|_\mathbb G$.
Moreover there exists a $|\partial E|_\mathbb G$-measurable horizontal section $\nu_E$ on $\mathbb G$, such that $|\nu_E(x)|_x=1$ for $|\partial E|_\mathbb G\text{-a.e. } x \in \mathbb G$ and
\begin{equation*}
\nabla_\mathbb G \mathbbm{1}_E=\nu_E \res |\partial E|_\mathbb G=((\nu_E)_1 \res |\partial E|_\mathbb G,\ldots, (\nu_E)_m \res |\partial E|_\mathbb G)
\end{equation*}
where $ \nu_E =\sum_{i=1}^m (\nu_E)_i X_i $.
We say that $\nu_E$ is the generalized intrinsic normal of $E$.
\end{definition}

\begin{remark}
We stress that $\nu_E$ is an inward-pointing normal and is defined up to a $|\partial E|_\mathbb G$-negligible set, therefore we assume that $E$ has a non-null perimeter. In particular the perimeter measure is concentrated in a subset of the topological boundary of $E$, the so-called reduced boundary $\partial^\ast_\mathbb G E$.
\end{remark}

\begin{definition}[Reduced boundary]\label{reduced_boundary}
Let $E$ be a $\mathbb G$-Caccioppoli set; we say that $x\in\partial^{*}_{\mathbb G} E$ if
\begin{gather}
\per{E}{U_c(x,r)} >0 \qquad \text {for any}\; r>0; \tag{$i$}\\
\text{there exists}\quad \lim_{r\to
0}\ave_{U_c(x,r)}\nu_E\;d\Per{E};\tag{$ii$}\\
{\left\Vert {\lim_{r\to 0}\ave_{U_c(x,r)}\nu_E \;d\Per{E}}
\right\Vert}_{\mathbb R^{m_1}}=1.\tag{$iii$}
\end{gather}
\end{definition}

\begin{lemma}[Differentiation Lemma \cite{K1}]\label{difflemma}
Assume $E$ is a $\mathbb G$-Caccioppoli set, then
\begin{equation*}
\lim_{r \to 0}\ave_{U_c(x,r)} \nu_E\;d\Per{E}=\nu_E(x),\qquad
\text{for}\;\Per{E}\text{-a.e.}\; x,
\end{equation*}
hence $\Per{E}$ is concentrated on the reduced boundary
$\partial^{*}_{\mathbb G} E$.
\end{lemma}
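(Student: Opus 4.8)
The plan is to read the statement as a Lebesgue--Besicovitch differentiation theorem for the $\mathbb R^{m}$-valued Radon measure $\nabla_\mathbb G\mathbbm 1_E$ with respect to its total variation $\Per{E}$. By the polar decomposition $\nabla_\mathbb G\mathbbm 1_E=\nu_E\res\Per{E}$ built into the definition of $\nu_E$, each component $(\nu_E)_i$ ($i=1,\dots,m$) lies in $L^1(\mathbb G,\Per{E})$; indeed it is bounded, since $|\nu_E(x)|_x=1$ for $\Per{E}$-a.e.\ $x$. So it is enough to show that for each fixed $i$
\[
\lim_{r\to0}\ave_{U_c(x,r)}(\nu_E)_i\,d\Per{E}=(\nu_E)_i(x)\qquad\text{for }\Per{E}\text{-a.e.\ }x ,
\]
and then to recombine the components to obtain the vector identity of the lemma.

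The analytic core is thus a differentiation theorem for $\mu:=\Per{E}$ along Carnot--Carath\'eodory balls. Since the Besicovitch covering property generally fails for $d_c$ on a Carnot group, one cannot simply differentiate against Lebesgue measure; the right fact is that $\mu$ itself is \emph{asymptotically doubling}, i.e.
\[
\limsup_{r\to0}\frac{\mu\bigl(U_c(x,2r)\bigr)}{\mu\bigl(U_c(x,r)\bigr)}<+\infty\qquad\text{for }\mu\text{-a.e.\ }x ,
\]
a known property of the $\mathbb G$-perimeter of a Caccioppoli set coming from the relative isoperimetric inequality in $\mathbb G$ (see \cite{K1}). Asymptotically doubling measures admit a Vitali-type covering theorem, hence a Lebesgue differentiation theorem: for every $g\in L^1(\mathbb G,\mu)$ one has $\ave_{U_c(x,r)}g\,d\mu\to g(x)$ for $\mu$-a.e.\ $x$ as $r\to0$. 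Applying this with $g=(\nu_E)_i$, $i=1,\dots,m$, on a common $\mu$-full set yields the displayed limit.

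For the ``hence'' part, fix $x$ in the $\mu$-full set where the limit above holds and where moreover $|\nu_E(x)|_x=1$. Then condition $(ii)$ of Definition~\ref{reduced_boundary} holds (the limit exists and equals $\nu_E(x)$) and $(iii)$ holds (its $\mathbb R^{m}$-norm is $|\nu_E(x)|_x=1$). Condition $(i)$ holds off a $\mu$-null set as well, because $\Per{E}(\mathbb G\setminus\operatorname{supp}\Per{E})=0$ while every $x\in\operatorname{supp}\Per{E}$ satisfies $\per{E}{U_c(x,r)}>0$ for all $r>0$. Intersecting these full-measure sets, $\Per{E}$-a.e.\ $x$ lies in $\partial^*_\mathbb G E$, which is exactly the claimed concentration.

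I expect the main obstacle to be precisely the first move of the analytic core: because Besicovitch's covering theorem is unavailable for $d_c$, the proof must genuinely rely on the asymptotic doubling property of the perimeter measure and the associated Vitali covering / differentiation machinery, rather than on a naive Euclidean-style argument.
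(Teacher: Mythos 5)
Your proposal is correct and is essentially the argument behind the statement: the paper itself gives no proof (the lemma is imported verbatim from \cite{K1}), and the proof there proceeds exactly as you outline, namely componentwise differentiation of the polar decomposition $\nabla_\mathbb G\mathbbm 1_E=\nu_E\res\Per{E}$ using that the perimeter measure is asymptotically doubling (a consequence of the a.e.\ upper and lower $(Q-1)$-density estimates coming from the relative isoperimetric inequality), so that the Vitali--Federer differentiation theorem for asymptotically doubling measures applies in place of the unavailable Besicovitch covering theorem. Your handling of condition $(i)$ via $\Per{E}(\mathbb G\setminus\operatorname{supp}\Per{E})=0$ is also the standard way to conclude the concentration on $\partial^*_\mathbb G E$.
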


\begin{remark} Thanks to  Lemma
\ref{difflemma}, we can redefine $\nu_E$ in a $\Per{E}$-negligible set, by setting  $\nu_E(x)=\lim_{r \to 0}\nabla_\mathbb G \mathbbm{1}_E(U_c(x,r))/\Per{E}(U_c(x,r))$  at every
point $x\in \partial^{*}_{\mathbb G} E$.
\end{remark}

\begin{remark}
Now we want to consider the case in which $\nu_E$ is a constant. 
From the previous definition it is clear that
\begin{equation*}   X_i \mathbbm{1}_E = (\nu_E)_i \res |\partial E|   
\; \text{ for }  1 \leq i \leq m.    \end{equation*}
Hence we can observe that 
 $\nu_E=X_1$  if and only if $X_1 \mathbbm{1}_E \geq 0$ and $X_i \mathbbm{1}_E=0$ for $2 \leq i \leq m$.   
 Thus the following proposition is justified.
\end{remark}

\begin{proposition}[Constant intrinsic normal set]
A set of locally finite perimeter $E \subset \mathbb G$ has a constant intrinsic normal if and only if there exists $X \in V_1$ such that $X \mathbbm{1}_E \geq 0$, $X \mathbbm{1}_E \not\equiv 0$ and
$Y \mathbbm 1_E=0$ for every $Y$ orthogonal to $X$ in $V_1$.
\end{proposition}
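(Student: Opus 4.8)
Both implications are essentially a matter of unwinding the definition of constant intrinsic normal, the only input being the identity
\[
X_i\mathbbm 1_E=(\nu_E)_i\res|\partial E|_{\mathbb G}\qquad (1\le i\le m)
\]
recorded in the remark preceding the statement, together with the $\mathbb R$-linearity of the map $Y\mapsto Y\mathbbm 1_E$ on $V_1$ and the orthogonality relations between the $X_i$. Throughout we use the standing assumption that $E$ has non-null perimeter.

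First I would prove the forward implication. Assume $\nu_E$ is (a.e.\ equal to) a constant horizontal vector $\nu\in V_1$ with $|\nu|=1$, and write $\nu=\sum_{i=1}^m c_iX_i$, so that $\sum_i c_i^2=1$. Put $X:=\nu\in V_1$. Using the displayed identity and linearity,
\[
X\mathbbm 1_E=\sum_{i=1}^m c_i\,X_i\mathbbm 1_E=\sum_{i=1}^m c_i^2\,|\partial E|_{\mathbb G}=|\partial E|_{\mathbb G}\ge 0,
\]
which is not identically zero since the perimeter of $E$ is non-null; and for any $Y=\sum_i y_iX_i$ orthogonal to $X$ we have $\sum_i y_ic_i=0$, hence $Y\mathbbm 1_E=\sum_i y_i X_i\mathbbm 1_E=\bigl(\sum_i y_ic_i\bigr)|\partial E|_{\mathbb G}=0$. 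This gives exactly the required $X$.

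For the converse, suppose there is $X\in V_1$ with $X\mathbbm 1_E\ge 0$, $X\mathbbm 1_E\not\equiv 0$ and $Y\mathbbm 1_E=0$ for every $Y\in V_1$ orthogonal to $X$. Since $X\mathbbm 1_E\not\equiv 0$ we have $X\ne 0$, and all three conditions are preserved when $X$ is multiplied by a positive scalar, so we may normalize $|X|=1$. For each $i$ decompose $X_i$ orthogonally in $V_1$ as $X_i=\langle X_i,X\rangle X+Z_i$ with $Z_i\perp X$; then $Z_i\mathbbm 1_E=0$, so $X_i\mathbbm 1_E=\langle X_i,X\rangle\,X\mathbbm 1_E$. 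In particular each $X_i\mathbbm 1_E$ is a Radon measure, so $E$ is a $\mathbb G$-Caccioppoli set, and
\[
\nabla_{\mathbb G}\mathbbm 1_E=\bigl(\langle X_1,X\rangle,\dots,\langle X_m,X\rangle\bigr)\,X\mathbbm 1_E .
\]
Since $X\mathbbm 1_E\ge 0$ and the coefficient vector is a unit vector, the total variation is $|\partial E|_{\mathbb G}=X\mathbbm 1_E$, and comparing with $\nabla_{\mathbb G}\mathbbm 1_E=\nu_E\res|\partial E|_{\mathbb G}$ we conclude $\nu_E=\sum_i\langle X_i,X\rangle X_i=X$ almost everywhere, a constant. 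Hence $E$ has constant intrinsic normal.

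I do not expect a genuine obstacle here: the proposition is a reformulation of the definition. The only points requiring a little care are the normalization $|X|=1$ (legitimate because the three hypotheses are invariant under positive rescaling and $X\ne0$) and the use of the non-null perimeter hypothesis to guarantee $X\mathbbm 1_E\not\equiv 0$ in the forward direction. One could alternatively complete $X$ to an orthonormal basis of $V_1$ and invoke the invariance of the perimeter measure and of the normal under orthogonal changes of basis of $V_1$, but the orthogonal-decomposition argument above avoids having to discuss that invariance explicitly.
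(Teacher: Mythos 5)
Your proposal is correct and follows essentially the same route as the paper, which justifies the proposition by the preceding remark (the identity $X_i\mathbbm 1_E=(\nu_E)_i\res|\partial E|_{\mathbb G}$ and the observation that $\nu_E=X_1$ iff $X_1\mathbbm 1_E\ge 0$ and $X_i\mathbbm 1_E=0$ for $2\le i\le m$); your orthogonal-decomposition step is just the explicit version of completing $X$ to an orthonormal basis. The only cosmetic remark is that the Caccioppoli property is already part of the hypothesis, so you need not re-derive it in the converse direction.
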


\begin{definition}[Vertical halfspace]
A set $H \subset \mathbb G$ is a vertical halfspace if it has a constant intrinsic normal and \[ Y \mathbbm 1_H=0 \quad \forall \, Y \in \bigoplus_{i=2}^\kappa V_i. \] 
A vertical halfspace can be represented as 
\[ \tau_q\left( \left\{x  :  \sum_{i=1}^m v_i x_i \geq 0  \right\} \right) \]
for some $q \in \mathbb G$ and some unit vector $v \in \mathbb R^m$. 
\end{definition}

\begin{theorem}\label{constant_halfspace}
Let $\mathbb G$ be a group of type $\star$.
Then every set with a constant intrinsic normal is a vertical halfspace.
\end{theorem}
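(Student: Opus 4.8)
Fix a type $\star$ basis $(X_1,\dots,X_m)$ of $V_1$ and let $E$ be a set of locally finite perimeter with constant intrinsic normal $\nu_E=P:=\sum_{i=1}^m v_iX_i$, where $\sum_i v_i^2=1$. Put $\mu:=\Per{E}\ge 0$. Since $\nabla_{\mathbb G}\mathbbm{1}_E=\nu_E\res\mu$ one has the basic relations $X_i\mathbbm{1}_E=v_i\,\mu$ for $i=1,\dots,m$, whence in particular $W\mathbbm{1}_E=\langle W,\nu_E\rangle\,\mu=0$ for every $W\in V_1$ orthogonal to $\nu_E$. By the definition of vertical halfspace, what has to be proved is that $Y\mathbbm{1}_E=0$ for every $Y$ in the ideal $\mathfrak v:=V_2\oplus\cdots\oplus V_\kappa$. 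The starting point is the elementary remark that $\mathfrak h:=\{Y\in\mathfrak g:\ Y\mathbbm{1}_E=0\}$ is a Lie subalgebra of $\mathfrak g$: if $A\mathbbm{1}_E=B\mathbbm{1}_E=0$ then $[A,B]\mathbbm{1}_E=A(B\mathbbm{1}_E)-B(A\mathbbm{1}_E)=0$; and $\mathfrak h$ contains the hyperplane $U:=V_1\cap\nu_E^\perp$.

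Next I would exploit the type $\star$ identities $[X_j,[X_j,X_i]]=0$, which as differential operators read $X_j^2X_i-2X_jX_iX_j+X_iX_j^2=0$. Applying them to $\mathbbm{1}_E$ and using $X_k\mathbbm{1}_E=v_k\mu$ yields, for all $i,j$, the second-order identities
\begin{equation*}
v_i\,X_j^2\mu = v_j\,(2X_jX_i-X_iX_j)\,\mu ,
\end{equation*}
and, contracting with $v_i$ and summing over $i$, $X_j^2\mu=v_j(2X_jP-PX_j)\mu$. On the other hand the basic relations give the first-order identity $[X_i,X_j]\mathbbm{1}_E=v_jX_i\mu-v_iX_j\mu$. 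Hence $[X_i,X_j]\mathbbm{1}_E=0$ for all $i,j$ — equivalently $V_2\subseteq\mathfrak h$ — is equivalent to the statement that $\nabla_{\mathbb G}\mu:=(X_1\mu,\dots,X_m\mu)$ is everywhere proportional to $\nu_E$, i.e.\ that $W\mu=0$ for every $W\in U$.

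The heart of the proof is the stronger rigidity claim: $Y\mu=0$ for every $Y\in\mathfrak h$. Granting it, $\mathfrak h$ is also stable under $\mathrm{ad}_{\nu_E}$ — because $[\nu_E,Y]\mathbbm{1}_E=\nu_E(Y\mathbbm{1}_E)-Y(\nu_E\mathbbm{1}_E)=-Y\mu=0$ for $Y\in\mathfrak h$ — and, since $V_1=\mathbb R\nu_E\oplus U$, a Lie subalgebra that contains $U$ and is stable under $\mathrm{ad}_{\nu_E}$ is forced to contain $V_2$, then $V_3$, and so on; thus $\mathfrak v\subseteq\mathfrak h$, which is the theorem. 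To prove the rigidity claim I would combine the second-order identities above with the analytic facts that $\mathbbm{1}_E$ is $BV$, is monotone non-decreasing along the integral curves of $P$ (as $P\mathbbm{1}_E=\mu\ge0$) and is invariant along $U$; this confines the analysis to the leaves of the subgroup generated by $P$ and $U$, where it can be closed either by a blow-up argument based on the dilations $\delta_\lambda$, the $Q$-homogeneity of $\mu$ and the Differentiation Lemma \ref{difflemma}, or by a direct computation using the explicit triangular form of $X_1,\dots,X_m$ in exponential coordinates. This passage from second-order information back to the first-order conclusion $Y\mu=0$ is the step I expect to be the main obstacle: note that in step $2$ one has $\mathfrak v=V_2$, no iteration over the layers is needed, and the argument coincides with the one in \cite{K1}. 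The role of the type $\star$ condition is precisely that the finitely many relations $\mathrm{ad}_{X_j}^2|_{V_1}=0$ make the bracket bookkeeping close up; in groups which are not of type $\star$ further independent brackets survive and the conclusion fails in general (Example 3.2 of \cite{K1}).

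Finally, once $\mathfrak v\subseteq\mathfrak h$, the normal subgroup $\mathbb V:=\exp(\mathfrak v)$, which has codimension one, leaves $E$ invariant under right translations modulo null sets; hence $\mathbbm{1}_E$ descends to the abelian quotient $\mathbb G/\mathbb V\cong\mathbb R^m$, where it is a function whose gradient is proportional to $\nu_E$. Therefore, in exponential coordinates, $\mathbbm{1}_E=g\big(\sum_{i=1}^m v_ix_i\big)$ for some $g\in BV_{\mathrm{loc}}(\mathbb R)$, and $g$ is monotone non-decreasing because $\mu\ge0$; so $g=\mathbbm{1}_{[c,\infty)}$ up to a null set and $E=\tau_q\big(\{x:\ \sum_{i=1}^m v_ix_i\ge0\}\big)$ for a suitable $q\in\mathbb G$, i.e.\ $E$ is a vertical halfspace.
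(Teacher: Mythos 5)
Your setup is sound (the identities $X_i\mathbbm{1}_E=v_i\mu$, the fact that $\mathfrak h:=\{Y:\ Y\mathbbm{1}_E=0\}$ is a subalgebra containing $U=V_1\cap\nu_E^{\perp}$, and the observation that $Y\mu=[Y,\nu_E]\mathbbm{1}_E$ for $Y\in\mathfrak h$), but the argument has a genuine gap exactly where the theorem lives. Your ``rigidity claim'' $Y\mu=0$ for all $Y\in\mathfrak h$ is, by the identity $Y\mu=[Y,\nu_E]\mathbbm{1}_E$, nothing but the statement $[\nu_E,\mathfrak h]\subseteq\mathfrak h$, and together with the subalgebra property this is equivalent to the conclusion $V_2\oplus\cdots\oplus V_\kappa\subseteq\mathfrak h$. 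So you have reformulated the theorem, not reduced it to something provable: the second-order identities $v_iX_j^2\mu=v_j(2X_jX_i-X_iX_j)\mu$ are recorded but never used to force any cancellation, and the proposed ways to ``close'' the argument (a blow-up using $\delta_\lambda$ and Lemma \ref{difflemma}, or a computation in triangular coordinates) are not carried out and are precisely where the difficulty sits — indeed monotone cones with constant horizontal normal in the Engel group show that no soft argument of this kind can work without using the type $\star$ structure in an essential, quantitative way. In the paper the first nontrivial step, $W\mu=0$ for $W\in U$ (equivalently $V_2\subseteq\mathfrak h$), is not elementary: it is obtained from the analytic claim inside the proof of Lemma 3.6 of \cite{K1} (alternatively Remark 4.9 of \cite{newambro}) — if $\tilde X_1g\ge0$, $\tilde X_2g=0$ and $\mathrm{span}\{\tilde X_1,\tilde X_2,[\tilde X_1,\tilde X_2]\}$ is isomorphic to the Heisenberg algebra, then $[\tilde X_1,\tilde X_2]g=0$ — and the type $\star$ relations $[X_j,[X_j,X_i]]=0$ are used exactly to exhibit such Heisenberg subalgebras $\mathrm{span}\{X_{i_1},\,X_{i_2}-k_{i_2}k_{i_1}^{-1}X_{i_1},\,[X_{i_1},X_{i_2}]\}$. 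Moreover the passage to $V_3$ requires a separate algebraic fact, Lemma \ref{lemma_norip} (whose proof needs the Chio determinant identity), to rewrite $[Y_1,[Y_1,Y_k]]$ as a combination of brackets without the repeated index $1$, after which Jacobi identities and the already proven vanishing on $V_2$ finish that layer, and higher layers follow by the induction \eqref{asterisco}. None of these ingredients, or substitutes for them, appear in your proposal, so the core of the proof is missing.

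Two smaller points. First, $\exp(V_2\oplus\cdots\oplus V_\kappa)$ has codimension $m=\dim V_1$, not codimension one; your final quotient argument still works with $\mathbb G/\mathbb V\cong\mathbb R^m$, but note that with the paper's definition of vertical halfspace this last reconstruction step is not even needed: once $Y\mathbbm{1}_E=0$ for every vertical $Y$ and the normal is constant, $E$ is a vertical halfspace by definition. Second, when you manipulate expressions such as $X_jX_iX_j\mathbbm{1}_E$ you are differentiating measures; this is harmless if, as in the paper, you first regularize $\mathbbm{1}_E$ by group convolution, but as written the distributional bookkeeping is not justified.
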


In order to prove Theorem \ref{constant_halfspace} we need Lemma \ref{lemma_norip} below, which shows an important property of stratified Lie algebras of type $\star$. 
\begin{lemma}\label{lemma_norip}
Let $\mathfrak{g}=V_1\oplus\cdots\oplus V_\kappa$ be a stratified Lie algebra of type $\star$. Then for every basis $(Y_1,\ldots,Y_m)$ of $V_1$
\begin{equation}\nonumber
[Y_1,[Y_1,Y_p]]=\sum_{j > 1} \alpha_{pij}[Y_j,[Y_j,Y_i]]+\sum_{k\neq j,k\neq i} \beta_{pijk} [Y_k,[Y_j,Y_i]]
\end{equation}
 holds for $p=2,\ldots,m$ (with $\alpha_{pij},\beta_{pijk} \in \mathbb{R} $).

\medskip

\textnormal{We stress that the first sum contains commutators ``with repeated indices different from 1'', whereas the second one contains commutators ``without repeated indices''. Roughly speaking, the previous lemma states that a commutator where an index $i$ is repeated (for instance $i=1$) can be written as a linear combination of the remaining commutators excluding those where the  index $i$
is repeated.}
\end{lemma}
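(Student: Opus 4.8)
\textbf{Proof proposal for Lemma \ref{lemma_norip}.}

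The plan is to reduce the general statement to the two defining relations of type $\star$ by pure multilinear algebra. Fix a basis $(X_1,\dots,X_m)$ of $V_1$ witnessing the type $\star$ property, so that $[X_j,[X_j,X_i]]=0$ for all $i,j$, and let $(Y_1,\dots,Y_m)$ be an arbitrary basis, say $Y_p=\sum_a c_{pa}X_a$. First I would record the two polarized identities already derived in the excerpt's remark, valid in \emph{any} Lie algebra once the relation $[X_j,[X_j,X_i]]=0$ is imposed for all basis indices: namely $[X_a,[X_b,X_i]]+[X_b,[X_a,X_i]]=0$ and the Jacobi-type consequence $2[X_a,[X_b,X_i]]=[X_b,[X_a,X_i]]+[X_i,[X_a,X_b]]$, which together pin down every length-$3$ bracket of the $X$'s in terms of the ``ordered'' ones $[X_k,[X_j,X_i]]$ with $i<j<k$ (and give that $[X_a,[X_b,X_c]]$ is totally antisymmetric up to the explicit sign bookkeeping). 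The upshot is an explicit normal form: $V_3$ is spanned, modulo the type $\star$ relations, by the brackets $[X_k,[X_j,X_i]]$ with $i<j<k$ together with whatever ``repeated-index'' brackets survive — but in the $X$-basis the repeated ones vanish, so $[X_a,[X_b,X_c]]$ is a signed multiple of the ordered bracket whenever $a,b,c$ are distinct and is $0$ whenever two of them coincide.

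Next I would expand $[Y_1,[Y_1,Y_p]]$ trilinearly in this $X$-basis: it equals $\sum_{a,b,c} c_{1a}c_{1b}c_{pc}[X_a,[X_b,X_c]]$. By the previous paragraph only the terms with $a,b,c$ pairwise distinct contribute, and each such term is a scalar multiple of a single ordered bracket $[X_k,[X_j,X_i]]$. Hence $[Y_1,[Y_1,Y_p]]$ lies in the span of $\{[X_k,[X_j,X_i]]:i<j<k\}$. The core of the argument is then to show that this same span is contained in the span of the right-hand side of the claimed identity, i.e.\ in $W_p:=\mathrm{span}\{[Y_j,[Y_j,Y_i]]:j>1\}+\mathrm{span}\{[Y_k,[Y_j,Y_i]]:k,j,i \text{ distinct}\}$. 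For this I would expand a generic ordered bracket $[X_k,[X_j,X_i]]$ back in the $Y$-basis (using $X_a=\sum_p d_{ap}Y_p$ with $(d_{ap})=(c_{pa})^{-1}$), collect the resulting length-$3$ $Y$-brackets into three families — the ``no repeated index'' ones, the ``repeated index $\neq 1$'' ones $[Y_j,[Y_j,Y_i]]$ with $j>1$, and the ``repeated index $=1$'' ones $[Y_1,[Y_1,Y_q]]$ and $[Y_1,[Y_q,Y_1]]$ etc. — and observe that the $X$-basis also satisfies the type $\star$ relations, so inside each family the same polarization identities hold and allow one to solve for any repeated-index-$1$ bracket in terms of the other two families. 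Concretely, the system of $m$ equations obtained by letting $p$ range over $2,\dots,m$ is triangular/invertible enough that one can back-substitute to express each $[Y_1,[Y_1,Y_p]]$ purely in terms of the $W_p$-generators; alternatively, one argues abstractly that the quotient of (the degree-$3$ part of) the free nilpotent algebra by the type $\star$ relations is spanned by the no-repetition and repetition-not-at-$1$ brackets in \emph{any} basis, since the type $\star$ relations are basis-independent as a \emph{set} (the whole content of the earlier remark is that imposing them for one basis of $V_1$ forces $[Z,[Z,\cdot]]=0$ only if the step is $2$, but imposing them as an ideal is symmetric under $GL(V_1)$ up to the span).

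The main obstacle I anticipate is bookkeeping rather than conceptual: one must be careful that the polarization identities in the $Y$-basis are \emph{not} the vanishing relations $[Y_j,[Y_j,Y_i]]=0$ (those fail in general), so the ordered-bracket normal form available in the $X$-basis has no literal analogue in the $Y$-basis, and one is genuinely forced to carry the repeated-index-$\neq 1$ terms as honest basis elements on the right-hand side. Thus the argument cannot be ``change basis and quote the $X$-relations''; it must track which length-$3$ $Y$-brackets are forced to vanish (none, generically) versus which are merely expressible via others. I would handle this by working in $\mathfrak{f}_{3,m}$, the free step-$3$ nilpotent Lie algebra on $m$ generators, quotienting by the ideal $I$ generated by all $[X_j,[X_j,X_i]]$, showing $\dim (V_3/ (V_3\cap I))\le \binom{m}{3}+2\binom{m}{3}-m(m-1) = \tfrac13 m(m-1)(m-2)$ as in Remark \ref{dimensionreason}, and then checking that the $W_p$-generators together with the no-repetition brackets already span a space of that dimension in the $Y$-basis, forcing each $[Y_1,[Y_1,Y_p]]$ into their span; comparing coefficients yields the constants $\alpha_{pij},\beta_{pijk}$. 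Finally, since $\mathfrak{g}=V_1\oplus\cdots\oplus V_\kappa$ is a quotient of $\mathfrak{f}_{3,m}$ (truncated appropriately) by a homogeneous ideal containing the type $\star$ relations, the identity descends to $\mathfrak g$, completing the proof.
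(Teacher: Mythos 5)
Your proposal never actually proves the central claim -- that each $[Y_1,[Y_1,Y_p]]$ can be solved for in terms of the no-repetition brackets and the repeated-index-$\neq 1$ brackets -- and each of the three routes you sketch toward it fails. First, the ``polarized identities'' $[X_a,[X_b,X_i]]+[X_b,[X_a,X_i]]=0$ are not available: hypothesis \eqref{eq:norip} holds only for the basis vectors themselves, not for their linear combinations, so you cannot polarize; indeed, by the Remark following Definition \ref{mm}, if those identities held the algebra would be of step $2$. What is true (and is all you need for your first reduction) is that every length-$3$ bracket of the $X$'s with a repeated index vanishes; but your further claim that every distinct-index bracket is a signed multiple of a single ordered bracket $[X_k,[X_j,X_i]]$ with $i<j<k$ is false and would force $\dim V_3\le\binom{m}{3}$, contradicting Remark \ref{dimensionreason}, which allows dimension up to $2\binom{m}{3}$. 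Second, the ideal generated by the relations \eqref{eq:norip} is \emph{not} $GL(V_1)$-invariant (invariance would again force step $2$), so the ``abstract'' alternative collapses -- the basis-dependence of the relations is exactly what makes the lemma nontrivial. Third, the dimension count in $\mathfrak{f}_{3,m}/I$ gives only the upper bound $\dim V_3\le 2\binom{m}{3}$; knowing that your proposed spanning family has at least that many members does not show it spans (its members could be dependent), and ``checking that the $W_p$-generators together with the no-repetition brackets already span a space of that dimension'' is precisely the assertion of the lemma, for which you offer no argument. (Incidentally, the count $\binom{m}{3}+2\binom{m}{3}-m(m-1)$ is not $\tfrac13 m(m-1)(m-2)$; the correct bookkeeping from Remark \ref{dimensionreason} is $m(m-1)+2\binom{m}{3}-m(m-1)=2\binom{m}{3}$.)

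The missing ingredient is a concrete invertible linear system, and this is what the paper supplies. Writing $X=AY$ and arranging $a_{11}\neq 0$ by permuting the $X$'s (permissible, since permutations preserve \eqref{eq:norip}), one expands only the $m-1$ relations $[X_1,[X_1,X_h]]=0$, $h=2,\dots,m$, in the $Y$-basis. Collecting the coefficients of the unknowns $[Y_1,[Y_1,Y_i]]$, $i=2,\dots,m$, produces a square $(m-1)\times(m-1)$ system whose matrix has entries $a_{11}a_{hi}-a_{h1}a_{1i}$, the second-order minors of $A$ containing $a_{11}$; by Chio's pivotal condensation its determinant equals $a_{11}^{m-2}\det A\neq 0$, so the system can be inverted and each $[Y_1,[Y_1,Y_p]]$ is expressed through the remaining brackets, exactly as claimed. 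Your proposal gestures at a system ``triangular/invertible enough that one can back-substitute'' but never identifies which relations generate it nor proves invertibility; that determinant identity, together with the normalization $a_{11}\neq 0$, is the actual mathematical content of the proof and is absent from your argument.
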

\begin{proof}
Let $(Y_1,Y_2,\ldots,Y_m)$ be any basis of $V_1$. Obviously $$V_3=\mathrm{span}\{[Y_k,[Y_j,Y_i]] \;|\;i,j,k=1,\ldots,m  \} .$$
Now let $(X_1,X_2,\ldots,X_m)$ be a basis of $V_1$ that respects (\ref{eq:norip}).
The relation between the two basis is
\begin{equation}\label{changebasis} 
\left(\!\begin{array}{c}
X_1 \\
X_2 \\
\cdots \\
X_m \\
\end{array}\!\right) =A \left(\! \begin{array}{c}
Y_1 \\
Y_2 \\
\cdots \\
Y_m \\
\end{array} \!\right) \end{equation}
with
$$
A= \left(\!\begin{array}{cccc}
a_{11} & a_{12} & \cdots & a_{1m} \\
a_{21} & a_{22} & \cdots & a_{2m} \\
\cdots & \cdots & \cdots & \cdots \\
a_{m1} & a_{m2} & \cdots & a_{mm} \\
\end{array}\!\right)
$$
invertible real $m \times m $ matrix.
Hence there exists $ i \in \{1,\ldots,m  \}$ such that $a_{i1} \neq 0 $. 
After reordering the basis $(X_1,X_2,\ldots,X_m) $, we can assume $a_{11}\neq 0$.
Now we recall the relations $$[X_1,[X_1,X_h]]=0 \quad \mbox{ for } h=2,\ldots,m .$$  
Replacing  (\ref{changebasis}) in these identities,
 we get $$\sum_{i,j,k} a_{1k}a_{1j}a_{hi} [Y_k,[Y_j,Y_i]]=0 \quad \mbox{ for } h=2,\ldots,m.  $$
Keeping in mind the antisymmetry of brackets, we obtain
 $$\sum_{i=2}^m (a_{11}a_{11}a_{hi}-a_{11}a_{h1}a_{1i}) [Y_1,[Y_1,Y_i]]+\sum_k \alpha_{hk}Z_k=0 \quad \mbox{ for } h=2,\ldots,m $$ where $Z_k$ are all the remaining commutators of length 3, i.e.  commutators 
 that are not of the form $[Y_1,[Y_i,Y_1]]$ with $i=2,\ldots,m$ which are all incorporated in the first sum.  
Now we move all terms containing $Z_k$ to the right-hand side and we get
$$ \sum_{i=2}^m (a_{11}a_{hi}-a_{h1}a_{1i}) [Y_1,[Y_1,Y_i]]=-\frac{1}{a_{11}}\sum_k \alpha_{hk}Z_k  \mbox{ for } h=2,\ldots,m.$$
%
If we denote by $U_\ell$ the $\ell$-th component of a vector $U \in V_3$, $\ell=1,\dots,\dim V_3$
with respect to an arbitrarily fixed basis,
then the components $[Y_1,[Y_1,Y_i]]_\ell$ of $[Y_1,[Y_1,Y_i]]$
 are solutions of a $(m-1) \times (m-1) $ linear system
with coefficient matrix  
$$M=(m_{ij})_{i,j=1,\ldots,m-1} \quad \mbox{ where } m_{ij}=a_{1,1}a_{i+1,j+1}-a_{i+1,1}a_{1,j+1}. $$
We stress that $ m_{ij}$ are the second order minors of $A$ that contains $a_{11}$. $M$ is invertible since  
$$ \det M=(a_{11})^{m-2} \det A \neq 0 .$$
This computation comes from the so-called \textit{Chio's pivotal condensation} (see, e.g., \cite{K2}, Theorem 3.6.1).
Therefore, we obtain eventually
\begin{align}\nonumber 
 \left(\!\begin{array}{c}
([Y_1,[Y_1,Y_2]])_\ell\\
\cdots  \\
([Y_1,[Y_1,Y_m]])_\ell\\
\end{array}\!\right) &=-\frac{1}{a_{11}}\sum_k (Z_k)_\ell \;  M^{-1}\left(\!\begin{array}{c}
\alpha_{2k} \\
\cdots  \\
\alpha_{mk} \\
\end{array}\!\right)    &\mbox{ for } \ell =1,\ldots,\dim V_3.
\end{align}
\end{proof}

\begin{proof}[Proof of Theorem \ref{constant_halfspace}]
Since $H$ has constant intrinsic normal, there exist an orthonormal basis ($Y_1$,$Y_2$, $\ldots$ ,$Y_m$) for $V_1$ such that \begin{equation}\label{ipo}Y_1 \mathbbm 1_H \geq 0 \quad \textrm{and} \quad Y_j \mathbbm 1_H=0 \; \textrm{ for} \; 2 \leq j \leq m .\end{equation} In order to prove that $H$ is a vertical halfspace, it is sufficient to show that \begin{equation}\label{tesi}Z \mathbbm 1_H=0 \quad \forall \; Z \in \bigcup_{j=2}^k V_j .\end{equation}       
The function $\mathbbm 1_H$ can be approximated by smooth functions using the group convolution (see \cite{FS}). Therefore, without loss of generality, we can replace $\mathbbm 1_H$ with a smooth function $g$ in (\ref{ipo}) and (\ref{tesi}).

  If $(X_1,\ldots,X_m)$ is a basis for $V_1$ satisfying (\ref{eq:norip}), then
\begin{equation}\label{sist_prec} Y_{1}=\sum a_{1i}X_{i}\,,\; \ldots \: ,\; Y_{m}=\sum a_{mi}X_{i} .\end{equation} We denote with $A$ the coefficient matrix \begin{equation}
A=\left(\begin{array}{cccc}
a_{11} & a_{12} & \ldots & a_{1m}\\
a_{21} & a_{22} & \ldots & a_{2m}\\
\ldots & \ldots & \ldots & \ldots\\
a_{m1} & a_{m2} & \ldots & a_{mm}\end{array}\right),\qquad\det A\neq0.\label{eq:nonsing}\end{equation}
Now we notice that (\ref{sist_prec}) yields
$$
\left(\!\!\begin{array}{c}
Y_1 g(x)\\
\ldots\\
Y_m g(x)\end{array}\!\!\right)=
A\left(\!\!\begin{array}{c}
X_1 g(x)\\
\ldots\\
X_m g(x) \end{array}\!\!\right)
\qquad \mbox{for every } x \in \mathbb G.
$$
From (\ref{eq:nonsing}) $A$ is invertible, hence the inverse image of $\{ (t,0,\ldots,0) \in \mathbb{R}^m \, | \, t \in \mathbb{R}    \}$ under $A$ is a line of $\mathbb{R}^m$, i.e. there exists $k \in \mathbb{R}^m$,  $k \neq 0$ such that
\begin{equation}\label{inverseimage}   A^{-1}\{ (t,0,\ldots,0) \in \mathbb{R}^m \, | \, t \in \mathbb{R}    \}= \{  \lambda k \, | \, \lambda \in \mathbb{R}  \}.     \end{equation} 
Since $k=(k_1,\ldots,k_m) \neq 0$, at least one of its components is not zero: for instance we can assume $k_m \neq 0$ and then, without loss of generality,  $k_m=1$.
We stress that $k$ depends only on $A$, so that the vector $k$ is independent of $x\in\mathbb G$.

If we indicate with $A_j$ the $j$-th row of $A$, we have \begin{equation}\label{wow_nozero} \langle A_1 , k \rangle \neq 0 \: .\end{equation} Indeed by the definition of $k$, $\langle A_j , k \rangle = 0$ for $j=2,\ldots,m$ and $\langle A_1 , k \rangle\neq 0$ by the invertibility of $A$.

Remember now that, by (\ref{ipo}), $Y_{1}g\geq0,\; Y_{2}g=0,\;\ldots ,\; Y_{m}g=0$. Therefore, if $x\in\mathbb G$, then
by (\ref{inverseimage}), $X_i g(x)=\lambda(x) k_i$ for $i=1,\ldots,m$. In particular, taking $i=m$,
we have $\lambda(x)=X_m g(x)$, which leads to $X_i g(x)=k_i X_m g(x) $ for $i=1,\ldots,m$.

Now $Y_1 g(x)= \langle A_1 ,  k  \rangle X_m g(x)$. We recall that $Y_1 g(x) \geq 0$ and (\ref{wow_nozero}), so that we can conclude that $X_m g(x) \geq 0$ for every $x \in \mathbb G$ or $X_m g(x) \leq 0$ for every $x \in \mathbb G$. 
\bigskip

Now let us show that $Zg=0$ for every $Z \in V_2$. 

\bigskip

Let $Z\in V_{2}$. Then $$Z=\sum_{i_{1},i_{2} \in \{1,\ldots,m \}}\alpha_{i_{1},i_{2}}[X_{i_{1}},X_{i_{2}}] \qquad (\alpha_{i_{1},i_{2}} \in \mathbb{R})$$
where $X_{i_{1}}g=k_{i_1}X_{m}g$, $\,X_{i_{2}}g=k_{i_2}X_{m}g$ and $X_{m}g\geq0$
(or $X_{m}g\leq0$).

The case $[X_{i_{1}},X_{i_{2}}]g=0$ is trivial. In particular, we can assume that,
 for instance $k_{i_1}\neq 0$. Then
we have $X_{m}g=(k_{i_1})^{-1}X_{i_{1}}g$ from which \begin{equation}\label{heis}\mbox{$X_{i_{2}}g=k_{i_2}(k_{i_1})^{-1}X_{i_{1}}g \qquad$   with $X_{i_{1}}g\geq0$ or $X_{i_{1}}g\leq0$.}
\end{equation}

On the other hand, by hypothesis $[X_{j},[X_{j},X_{i}]]=0$  (with $i,j=1,\ldots,m$),
so that
$$\textrm{span}\{X_{i_{1}},X_{i_{2}}-k_{i_2}(k_{i_1})^{-1}X_{i_{1}},[X_{i_{1}},X_{i_{2}}]\}$$
 is a Lie algebra isomorphic to the Lie algebra
 of the Heisenberg group $\mathbb{H}^1$ (remember $[X_{i_{1}},X_{i_{2}}]\neq 0$). Indicate with $\mathfrak{h}$ the Lie algebra of $\mathbb{H}^1$.
The following claim, shown inside the proof of Lemma 3.6 in \cite{K1}, holds:
\begin{itemize}
\item if $\tilde{X}_1$,$\tilde{X}_2 \in \mathfrak{g}$ and $\tilde{X}_1 g \geq 0$, $\tilde{X}_2 g = 0$ and
   $\tilde{\mathfrak{g}}:=\textrm{span}\{\tilde{X}_1,\tilde{X}_2,[\tilde{X}_1,\tilde{X}_2]\}$  is a subalgebra of $\mathfrak{g}$ isomorphic to $\mathfrak{h}$,
 then $[\tilde{X}_1,\tilde{X}_2]g=0$ .
\end{itemize}
Alternatively this claim can be seen as an easy consequence of Remark 4.9 of \cite{newambro}. 
Recalling (\ref{heis}), we can conclude that $[X_{i_{1}},X_{i_{2}}]g=0$. Thus $Zg=0$ for every $Z \in V_2$. 

\bigskip

Now, in order to deal with vector fields belonging to $V_3$, we use the basis $(Y_{1},\ldots,Y_{m})$ of $V_1$. Since it is a basis, for every $W\in V_{3}$ there exist $ Z_j \in V_{2}$ such that $ W=\sum_{i,j}\beta_{ij}[Y_{i},Z_j]$. First of all, 
\begin{equation}\label{i>1}
\mbox{
if $i=2,\ldots,m$ then $[Y_{i},Z_j]g=0$, since $Z_jg=0$ e $Y_{i}g=0$.}
\end{equation}
Consider now the case $i=1$. We notice that $Z_j$ can be written as a linear combination of two types of commutators: 
$[Y_{1},Y_{k}]$ and $[Y_{l},Y_{i}]$, with $l \neq 1$ and $i \neq 1 $.
Hence, dropping the index $j$ from $Z_j$, we get
$$[Y_{1},Z]=\sum_{k}\gamma_{k}[Y_{1},[Y_{1},Y_{k}]]+\sum_{ l\neq1,\, i\neq1}\lambda_{li}[Y_{1},[Y_{l},Y_{i}]].$$
Moreover, by Lemma \ref{lemma_norip}, $$[Y_{1},[Y_{1},Y_{k}]]=\sum_{j \neq 1}\mu_{kij}[Y_{j},[Y_{j},Y_{i}]]+\sum_{l \neq j, l \neq i}\theta_{kijl}[Y_{l},[Y_{j},Y_{i}]] . $$
Thus, $[Y_{1},Z]g$ can be written as a linear combination of three types of functions:
\begin{itemize}
\item[i)] $[Y_{j},Z]g$ with $j \neq 1$ and $Z \in V_2$;
\item[ii)] $[Y_{l},[Y_{j},Y_{i}]]g$ with $l \neq j, l \neq i$;
\item[iii)] $[Y_{1},[Y_{l},Y_{i}]]g$ with $l\neq1,\, i\neq1$.
\end{itemize}
By \eqref{i>1} above, terms of type i) vanish. Analogously,
terms of type iii) vanish since they can be reduced, by Jacobi identity, to a sum of terms of type i).
Finally, as for terms of type ii), either $l>1$ or $l=1$. If $l>1$, they vanish again by \eqref{i>1}. If $l=1$, then necessarily
$j,i\neq 1$ and then, again by Jacobi identity, they can be written as a sum of two terms of type i).
%

Therefore, $Wg=0$ for every $W \in V_{3}$.
\bigskip

In order to complete the proof for the other layers, we show that for every $k\geq3$ the following claim holds:
\begin{align}
&\textrm{for every $W\in V_{k}$ there exist $Z_j \in V_{k-1}$, $\tilde{Z}_{r} \in V_h$ and } \nonumber \\
&\textrm{$\hat{Z}_{s} \in V_{k-h}$ with $2 \! \leq \! h \! \leq \! k \! - \! 2$ such that $W$ is a linear combination }
 \label{asterisco} \\ \nonumber &\textrm{of commutators of the form $[Y_{l},Z_j]$
with $l>1$ and $[\tilde{Z}_{r},\hat{Z}_{s}]$.}
\end{align}
We argue by induction on $k$.
We have just seen the case $k=3$. Suppose now $n\geq4$, and
assume the claim is true for $k\leq n-1$. We show it holds for
$k=n$.

Indeed, for every $W\in V_{n}$ there exist $Z_i \in V_{n-1}$ such that $W$ is a linear combination of commutators $[Y_{j},Z_i]$
with $j=1,\ldots,m$. Obviously, the only commutators to work on in order to show (\ref{asterisco}) are those  with $j=1$.

But every $Z_i \in V_{n-1}$ is a linear combination of commutators of type $[Y_{l},\tilde{Z}]\;$ \begin{small}$(\textrm{with}\;\tilde{Z}\in V_{n-2}\; \textrm{and} \; l>1)$\end{small}
and
$[Z_{1},Z_{2}]\:$ \begin{small}$(\mbox{with }Z_{1}\in V_{h},\; Z_{2}\in V_{n-1-h},\;2\leq h\leq n-3)$\end{small},
we can reduce $[Y_{1},Z_i]$ to a linear combination of commutators of type \[ [Y_{1},[Y_{l},\tilde{Z}]]=-[\tilde{Z},[Y_{1},Y_{l}]]-[Y_{l},[\tilde{Z},Y_{1}]] \]
and
\[
[Y_{1},[Z_{1},Z_{2}]]=-[Z_{1},[Z_{2},Y_{1}]]-[Z_{2},[Y_{1},Z_{1}]].
\]
Hence (\ref{asterisco}) holds for every $k \geq 3$.

Consequently, $Z g=0$ for every $Z \in V_k$, with $k \geq 3$.
Therefore the theorem is proved.
\end{proof}

\section{Rectifiability} \label{geometricmotivation}
As a consequence of Theorem \ref{constant_halfspace} it is possible to extend Rectifiability theorem to Carnot groups of type $\star$. First we will remind some preliminary notions.  
\begin{definition}[$\mathbb G$-linearity]
A map $L:\;\mathbb G \to \mathbb R$ is $\mathbb G$-linear if it is a homomorphism
from $\mathbb G\equiv(\mathbb R^n,\cdot)$ to $(\mathbb R,+)$ and if it is positively
homogeneous of degree $1$ with respect to the dilations of $\mathbb G$,
that is $L(\delta_\lambda x)= \lambda Lx$ for $\lambda >0$ and
$x\in\mathbb G$. The $\mathbb R$-linear set of $\mathbb G$-linear functionals
$\mathbb G\to\mathbb R$ is indicated as $\mathcal L_\mathbb G$ and it is endowed with
the norm $$ \Vert L\Vert_{\mathcal
L_\mathbb G}:=\sup\{|L(p)|:\;d_{c}(p,0)\leq1\}. $$

\end{definition}

\begin{definition}[Pansu diffentiability] \label{differential}
Let $\Omega$ be an open set in $\mathbb G$, then $f:\Omega \rightarrow
\mathbb R$ is Pansu differentiable (see \cite{pansu} and \cite{koranireim}) in $x_0$ if there exists a $\mathbb G$-linear map $L$ such that $$ \lim_{x\to
x_0}\frac{f(x)-f(x_0)-L(x_0^{-1}\cdot x)}{ d_c(x,x_0)}= 0. $$
An equivalent definition is the following one:
there exists a group homomorphism $L$ from $\mathbb G$ to $(\mathbb R,+)$ such that $$
\lim_{\lambda\to 0+} \frac{f(\tau_{x_0}(\delta_{\lambda}
v))-f(x_0)}{ \lambda} = L(v) $$ uniformly with respect to $v$
belonging to compact sets in $\mathbb G$. 

In particular, $L$ is unique
and we write $L=d_{\mathbb G}f(x_0)$.  We remark that Pansu differential depends only on $\mathbb G$ and not on a particular choice of a basis of $\mathfrak g$.
\end{definition}

\begin{definition}[$\mathbf C^1_\mathbb G$ functions and sections]\label{C1H}
If $\Omega$ is an open set in $\mathbb G$, we denote with $\mathbf C^1_{\mathbb G} (\Omega)$
the set of continuous real functions in  $\Omega$ such that
$d_{\mathbb G}f:\Omega\to\mathcal L_\mathbb G$ is continuous in $\Omega$.
Furthermore, we denote with $\mathbf C^1_{\mathbb G} (\Omega,H\mathbb G)$ the set of all
sections $\phi=\sum_{i=1}^{m_1} \phi_i X_i$ of $H\mathbb G$ whose coordinates $\phi_i\in\mathbf C^1_{\mathbb G}
(\Omega)$ for $i=1,\dots,m_1$.
\end{definition}

\begin{definition}[Horizontal divergence]
If
$\phi=(\phi_1,\dots,\phi_{m_1})$ is a horizontal section such
that $X_j\phi_j\in L^1_{\rm loc}(\mathbb G)$ for $j=1,\dots,m_1$,  the horizontal divergence of $\phi$ is the real valued function
\begin{equation*}
\mathrm{div}_{\mathbb G}\,(\phi):=\sum_{j=1}^{m_1}X_j\phi_j.
\end{equation*}
\end{definition}

\medskip

If ${\Omega}\subset \mathbb G$ is open, the space of compactly supported smooth
sections of $H\mathbb G$ is denoted by ${\mathbf
C}_0^{\infty}({\Omega},H\mathbb G)$. If $k\in\mathbb N$, ${\mathbf
C}_0^{k}({\Omega},H\mathbb G)$ is defined similarly.

\medskip

For details and proofs about Calculus of Variations on Carnot groups, see \cite{garnie} and \cite{FSSC1}.

\medskip

\begin{definition}[$\mathbb G$-regular hypersurface]\label{Gregolare} $S\subset \mathbb G$ is a 
$\mathbb G$-regular hypersurface if for every
$x\in S$ there exist a neighborhood $\mathcal{U}$ of $x$ and a
function $f\in\mathbf C^1_{\mathbb G} (\mathcal{U})$ such that
\begin{gather} S\cap \mathcal{U} = \{y\in \mathcal{U}:
f(y)=0\};\tag{$i$}\\ \nabla_{\mathbb G}f(y)\neq 0\tag{$ii$}\quad\mbox{
for }y\in \mathcal{U}.
\end{gather}
\end{definition}

\begin{definition}[$\mathbb G$-rectifiable set]\label{def rectifiability}
$\Gamma\subset \mathbb G$ is said to be $(Q-1)$-dimensional
$\mathbb G$-rectifiable if there exists a sequence of $\mathbb G$-regular
hypersurfaces $(S_j)_{j\in\mathbb N}$ such that
\begin{equation}\label{rect}
\mathcal H_{c}^{Q-1}\left(\Gamma\setminus\bigcup_{j\in\mathbb
N}S_j\right) = 0,
\end{equation}
where $\mathcal H_{c}^{Q-1} $ is the ($Q-1$)-dimensional Hausdorff measure related to the distance $d_c$.
\end{definition}

\begin{definition}[Tangent group and tangent plane]
If $S=\{x:f(x)=0\}\subset \mathbb G$ is a $\mathbb G$-regular hypersurface, the
 tangent group to $S$ at $x$ is
the proper subgroup of $\mathbb G$ defined as$$T_{\mathbb G}^gS(x):=\{v\in\mathbb G :\;{\langle\nabla_\mathbb G f(x),\pi_xv
\rangle}_x=0 \},$$ where $\pi_xv:=\sum_{j=1}^{m_1} v_j X_j(x) $.   

We can also define the tangent plane to
$S$ at $x$ as $$ T_{\mathbb G}S(x):=x\cdot T_{\mathbb G}^gS(x). $$
\end{definition}
This definition is good; in fact the tangent plane does not depend on
the particular function $f$ defining the surface $S$ because of
point $(iii)$ of  Theorem 2.1 (Implicit Function Theorem) in \cite{FSSC dini} that yields $$
T_\mathbb G^gS(x)=\{v\in\mathbb G:\;{\langle \nu_E(x),\pi_xv \rangle}_x=0\} $$
where $\nu_E$ is the intrinsic normal.

We stress that the notion of $\mathbb G$-regular hypersurfaces is different
from the one of Euclidean $\mathbf C^1$-hypersurfaces in $\mathbb R^n$. In particular, in Corollary \ref{Balog}  we will consider
Euclidean $\mathbf C^1$-surfaces, which can have characteristic points, i.e. points $p\in S$ where the Euclidean tangent plane $T_pS$
contains the horizontal fiber $H\mathbb G_p$.
If $S$ is an Euclidean $\mathbf C^1$-hypersurface in $\mathbb G$, we denote with $\mathcal C(S)$ the set of its characteristic points.
The tangent group does not exist
in these points;
however, there is an important result about them proved in \cite{magnani3}:
in any Carnot group it holds that, if $S$ is a  $\mathbf C^1$-hypersurface, $\mathcal H^{Q-1}_c(\mathcal C(S))=0$.

\bigskip

In \cite{K1}, the rectifiability theorem is proved for step 2 Carnot groups and Blow-up Theorem is the main key of the proof and also the reason of the restriction to step 2.
In fact, there is a counterexample regarding a particular step 3 Carnot group, i.e. the Engel group, for which Blow-up Theorem does not hold (see \cite{K1}, Example 3.2).
In particular, in the Engel group there exist cones (i.e. dilation-invariant sets) with constant horizontal normal that are not vertical halfspaces, but nevertheless have the vertex belonging to the reduced boundary.

 The problem of rectifiability in general Carnot groups
remains an open question.
 Here we recall the Blow-up Theorem.
\bigskip

Let $\mathbb G$ be a  Carnot group.
For any set $E\subset \mathbb G$, $x_0\in\mathbb G$ and $r>0$ we
define the sets
$$ E_{r,x_0}
:=\{x: x_0\cdot \delta_r(x) \in E\}=\delta_{\frac1
r}\tau_{x_0^{-1}}E.$$
If $v\in H\mathbb G_{x_0}$ we define the
halfspaces $S^+_{\mathbb G}(v)$ and $S^-_{\mathbb G}(v)$ as
\begin{equation}\label{halfspace1}
\begin{split}
S^+_{\mathbb G}(v)&:=\{x: \scal{ \pi_{x_0}x}{v}{x_0}\geq 0\}\\
S^-_{\mathbb G}(v)&:=\{x: \scal{ \pi_{x_0}x}{v}{x_0}\leq 0\}.
\end{split}
\end{equation}
The common topological boundary $T^g_{\mathbb G}(v)$ of $S^+_{\mathbb G}(v)$ and
of $S^-_{\mathbb G}(v)$ is the subgroup of $\mathbb G$
\begin{equation}
T^g_{\mathbb G}(v):=\{x: \scal{ \pi_{x_0}x}{v}{x_0}= 0\}.\nonumber
\end{equation}
Moreover, we shall denote with $\mathcal
H^{n-1}$ the $(n-1)$-dimensional Hausdorff measure related to the
Euclidean distance in $\mathbb R^n\simeq \mathbb G$, with $\mathcal S^{Q-1}_{c}$ the
$(Q-1)$-dimensional spherical Hausdorff measure related to the distance $d_c$
in $\mathbb G$, and with $\mathcal S^{Q-1}_{\infty}$ the $(Q-1)$-dimensional spherical
Hausdorff measure related to the distance $d_\infty$ in $\mathbb G$.

\begin{theorem}[Blow-up \cite{K1}]\label{blowup_main}
If $E$ is a $\mathbb G$-Caccioppoli set in a step 2 Carnot group $\mathbb G$, $x_0\in \partial^{*}_{\mathbb G} E$  and
$\nu_E(x_0)\in H\mathbb G_{x_0}$ is the intrinsic normal then
\begin{equation}\label{blowup_convergence}
\lim_{r\to 0}{\bf 1}_{E^{\phantom {++}}_{r,x_0}}={\bf
1}_{S^+_{\mathbb G}(\nu_E(x_0))}\qquad \text{in\;} L^1_{\mathrm {loc}}(\mathbb G)
\end{equation}
and for all $R>0$
\begin{equation}\label{blowup_convergence1}
\lim_{r\to 0}\per{E_{r,x_0}}{U_c(0,R)}=
\per{S^+_{\mathbb G}(\nu_E(x_0))}{U_c(0,R)}
\end{equation}
and
$$ \per{S^+_{\mathbb G}(\nu_E(x_0))}{U_c(0,R)}= {\mathcal
H}^{n-1}(T^g_\mathbb G(\nu_E(0))\cap
U_c(0,R)).
$$
\end{theorem}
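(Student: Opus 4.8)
The plan is to run De Giorgi's blow-up scheme, adapted to the Carnot setting exactly as in \cite{K1}: rescale $E$ at $x_0$, extract a subsequential limit, identify that limit through the classification of constant-normal sets (available in step $2$), and then promote the convergence to the full limit and to convergence of perimeters.

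First I would record the scaling identities for $E_{r,x_0}=\delta_{1/r}\tau_{x_0^{-1}}E$. Since the vector fields of $V_1$ are homogeneous of degree $1$ under $\delta_\lambda$ and invariant under left translations, one obtains, for every $R>0$,
\[
\per{E_{r,x_0}}{U_c(0,R)}=r^{\,1-Q}\,\per{E}{U_c(x_0,rR)},\qquad \nu_{E_{r,x_0}}(x)=\nu_E\bigl(x_0\cdot\delta_r(x)\bigr).
\]
Next, from $x_0\in\partial^{*}_{\mathbb G}E$ (Definition \ref{reduced_boundary}), the relative isoperimetric inequality in $\mathbb G$ and the volume density estimates it entails, I would derive uniform bounds $\per{E_{r,x_0}}{U_c(0,R)}\le C(R)$ together with the nondegeneracy of $\mathcal L^n(E_{r,x_0}\cap U_c(0,R))$ and of $\mathcal L^n(U_c(0,R)\setminus E_{r,x_0})$ for all small $r$. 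The compactness theorem for sets of finite $\mathbb G$-perimeter then produces, along any $r_j\to 0$, a subsequence with $\mathbf 1_{E_{r_j,x_0}}\to\mathbf 1_F$ in $L^1_{\mathrm{loc}}(\mathbb G)$, with $F$ a nontrivial $\mathbb G$-Caccioppoli set.

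The identification step is to show that $F$ has constant intrinsic normal equal to $\nu_E(x_0)$. From the scaling identity for $\nu_{E_{r,x_0}}$ and from points $(ii)$--$(iii)$ of Definition \ref{reduced_boundary} together with the Differentiation Lemma \ref{difflemma}, the averages $\ave_{U_c(0,R)}\nu_{E_{r,x_0}}\;d\Per{E_{r,x_0}}$ converge to $\nu_E(x_0)$; feeding this into the weak-$*$ convergence $\nabla_{\mathbb G}\mathbf 1_{E_{r_j,x_0}}\rightharpoonup\nabla_{\mathbb G}\mathbf 1_F$ forces $\nu_F\equiv\nu_E(x_0)$. Since $\mathbb G$ has step $2$, the classification of constant-normal sets proved in \cite{K1} gives that $F$ is a vertical halfspace; being dilation-invariant (a cone) with $0$ in its reduced boundary, $F$ must coincide with $S^+_{\mathbb G}(\nu_E(x_0))$. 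As the subsequential limit does not depend on the chosen sequence $r_j$, the full convergence \eqref{blowup_convergence} follows.

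Finally I would establish \eqref{blowup_convergence1}. Lower semicontinuity of the $\mathbb G$-perimeter under $L^1_{\mathrm{loc}}$ convergence already yields $\liminf_{r\to 0}\per{E_{r,x_0}}{U_c(0,R)}\ge\per{S^+_{\mathbb G}(\nu_E(x_0))}{U_c(0,R)}$ for a.e.\ $R>0$. The reverse inequality, i.e.\ the absence of loss of mass, is the main obstacle: through the scaling identity it reduces to showing that $r^{\,1-Q}\per{E}{U_c(x_0,r)}$ actually converges — not merely has finite $\limsup$ — to $\per{S^+_{\mathbb G}(\nu_E(x_0))}{U_c(0,1)}$, and this needs a De Giorgi–type excess estimate bounding the rescaled perimeter from above by that of the limiting halfspace. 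Granting it, the last displayed identity is a direct computation: the reduced boundary of $S^+_{\mathbb G}(\nu_E(x_0))$ is the vertical subgroup $T^g_{\mathbb G}(\nu_E(0))$, which is a Euclidean hyperplane through the origin (only the horizontal component of the normal enters the defining inequality), and on it the intrinsic perimeter measure coincides, with the normalization fixed above, with $\mathcal H^{n-1}\res\bigl(T^g_{\mathbb G}(\nu_E(0))\cap U_c(0,R)\bigr)$.
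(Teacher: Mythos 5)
The paper does not actually prove Theorem \ref{blowup_main}: it is recalled verbatim from \cite{K1} (``Here we recall the Blow-up Theorem''), so your attempt has to be measured against the proof given there. Your overall strategy --- scaling identities, density estimates and compactness, identification of the subsequential limit as a set with constant intrinsic normal $\nu_E(x_0)$, the step-2 classification of such sets as vertical halfspaces, pinning the halfspace through the origin, and uniqueness of the limit --- is exactly the De Giorgi scheme implemented in \cite{K1}, and the first three paragraphs of your sketch are sound modulo the standard estimates you invoke. One small slip there: the parenthetical claim that the subsequential limit $F$ is ``dilation-invariant (a cone)'' is not justified by anything you have established (no monotonicity formula is available), and it is also unnecessary, since once $F$ is known to be a vertical halfspace the nondegeneracy of $\mathcal L^n(F\cap U_c(0,R))$ and of its complement already forces $F=S^+_{\mathbb G}(\nu_E(x_0))$.

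The genuine gap is in the proof of \eqref{blowup_convergence1}. You obtain the lower bound by semicontinuity and then assert that the reverse inequality ``needs a De Giorgi--type excess estimate'', which you do not supply (``Granting it, \dots''). No excess-decay estimate is available, or needed, here; the mechanism in \cite{K1} is purely measure-theoretic and exploits the reduced-boundary hypothesis directly. Indeed, by the scaling identity and conditions $(ii)$--$(iii)$ of Definition \ref{reduced_boundary} together with Lemma \ref{difflemma},
\begin{equation*}
\lim_{r\to 0}\frac{\bigl\langle \nabla_{\mathbb G}\mathbf 1_{E_{r,x_0}}\bigl(U_c(0,R)\bigr),\,\nu_E(x_0)\bigr\rangle}{\per{E_{r,x_0}}{U_c(0,R)}}=1,
\end{equation*}
so the total variation over $U_c(0,R)$ has the same limit as the scalar quantity $\bigl\langle \nabla_{\mathbb G}\mathbf 1_{E_{r,x_0}}(U_c(0,R)),\nu_E(x_0)\bigr\rangle$. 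The latter is a linear, hence weak-$*$ continuous, functional of the vector-valued measures $\nabla_{\mathbb G}\mathbf 1_{E_{r,x_0}}$, which converge to $\nabla_{\mathbb G}\mathbf 1_{S^+_{\mathbb G}(\nu_E(x_0))}$ by \eqref{blowup_convergence}; since the perimeter measure of the halfspace does not charge $\partial U_c(0,R)$, it passes to the limit on $U_c(0,R)$ and equals $\per{S^+_{\mathbb G}(\nu_E(x_0))}{U_c(0,R)}$ there (the normal of the limit being the constant $\nu_E(x_0)$). This gives $\limsup_{r\to 0}\per{E_{r,x_0}}{U_c(0,R)}\le \per{S^+_{\mathbb G}(\nu_E(x_0))}{U_c(0,R)}$ with no excess estimate. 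Until you replace ``granting it'' by this (or an equivalent) argument, \eqref{blowup_convergence1} is not proved; the final displayed identity is, as you say, a direct computation once the halfspace is identified.
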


\begin{theorem}[Rectifiability Theorem \cite{K1}]\label{principale_vec} If $E\subseteq \mathbb G$ is a $\mathbb G$-Caccioppoli set
 in a step 2 Carnot group $\mathbb G$, then
\begin{gather}
\partial^{*}_{\mathbb G} E \;\text{is} \; (Q-1)\text{-dimensional}\;
\mathbb G\text{-rectifiable,} \tag{$i$}
\end{gather}
that is $\partial^{*}_{\mathbb G}{E} = N\cup\bigcup_{h=1}^{\infty}K_h$, where
${\mathcal H}^{Q-1}_{c}({N})=0$ and $K_h$ is a compact subset of a
$\mathbb G$-regular
hypersurface $S_h$;
\begin{gather}
\text{$\nu_E(p)$ is the $\mathbb G$-normal to $S_h$
at $p$, for all}\; p\in K_h;\tag{$ii$}
\end{gather}
\begin{gather}
\Per{E} = \theta_c\mathcal{S}_{c}^{Q-1}\res \partial^{*}_{\mathbb G}{E},\tag{$iii$}
\end{gather}
where $$\theta_c(x)= \frac 1 {\omega_{Q-1}}{\mathcal
H}^{n-1}\left(\partial S^+_{\mathbb G}(\nu_E(x))\cap U_c(0,1)\right).$$
Here $\omega_{k}$ is the $k$-dimensional measure of the
$k$-dimensional ball in $\mathbb R^k$. If we replace
the
$\mathcal S_c$-measure by the
$\mathcal S_\infty$-measure, the corresponding density $\theta_\infty$ is a constant. More precisely
\begin{gather} \Per{E} = \theta_\infty
\;\mathcal{S}_{\infty}^{Q-1}\res \partial^{*}_{\mathbb G}{E},\tag{$iv$}
\end{gather}
where $$\theta_\infty=\frac{\omega_{m_1-1}\omega_{m_2}
\varepsilon_2^{m_2}} {\omega_{Q-1}}=\frac 1
{\omega_{Q-1}}{\mathcal H}^{n-1}\left(\partial S^+_{\mathbb G}(\nu_E(0))
\cap U_\infty(0,1)\right).$$ Here $\varepsilon_2$ is a constant that appears in (\ref{distanzainfinito}).
\end{theorem}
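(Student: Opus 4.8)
The plan is to run De Giorgi's blow-up scheme, as in \cite{K1}, with the Blow-up Theorem \ref{blowup_main} serving as the main input (it rests in turn on the step~$2$ forerunner of Theorem \ref{constant_halfspace}: in step~$2$ groups, sets with constant intrinsic normal are vertical halfspaces). First I would record, via the Differentiation Lemma \ref{difflemma}, that $\Per{E}$ is concentrated on the reduced boundary $\partial^{*}_{\mathbb G} E$ of Definition \ref{reduced_boundary}, with $\nu_E$ well defined and of unit length $\Per{E}$-a.e. there. The statement then splits into the rectifiability and normal-identification parts $(i)$--$(ii)$ and the two perimeter representation formulas $(iii)$--$(iv)$; once the blow-up densities are known, $(iii)$--$(iv)$ are a standard differentiation-of-measures argument, so the real work lies in $(i)$--$(ii)$.

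\emph{Parts $(iii)$ and $(iv)$.} Combining Theorem \ref{blowup_main} with the scaling identity $\per{E_{r,x}}{U_c(0,1)}=r^{1-Q}\per{E}{U_c(x,r)}$, for $\Per{E}$-a.e. $x$ the limit $\lim_{r\to 0}r^{1-Q}\per{E}{U_c(x,r)}$ exists, is finite and positive, and equals $\mathcal H^{n-1}(\partial S^+_{\mathbb G}(\nu_E(x))\cap U_c(0,1))=\omega_{Q-1}\,\theta_c(x)$; moreover $\theta_c$ lies between two positive constants, since $\partial S^+_{\mathbb G}(v)\cap U_c(0,1)$ has $\mathcal H^{n-1}$-measure depending continuously on, and never vanishing with, the unit vector $v$. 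Hence $\Per{E}\res\partial^{*}_{\mathbb G} E$ is asymptotically doubling, and the differentiation theorem for such measures — applied with Carnot-Carath\'eodory balls, which are comparable to metric balls and hence form an admissible basis — gives, after tracking the normalisation constants, the representation $\Per{E}=\theta_c\,\mathcal S^{Q-1}_c\res\partial^{*}_{\mathbb G} E$, which is $(iii)$. For $(iv)$ I would rerun this with the $d_\infty$-balls $U_\infty(0,r)$ in place of the $d_c$-balls, which is legitimate since $d_\infty$ and $d_c$ are equivalent. The new point is that the resulting density $\theta_\infty(x)=\omega_{Q-1}^{-1}\mathcal H^{n-1}(\partial S^+_{\mathbb G}(\nu_E(x))\cap U_\infty(0,1))$ does not depend on $x$: in a step~$2$ group $\partial S^+_{\mathbb G}(v)=T^g_{\mathbb G}(v)$ is the Euclidean hyperplane $\{\tilde p_1\cdot v=0\}\times\mathbb R^{m_2}$ and $U_\infty(0,1)$ is a Euclidean cylinder $\{\|\tilde p_1\|<1\}\times\{\varepsilon_2\|\tilde p_2\|^{1/2}<1\}$, so the $(n-1)$-measure of their intersection is the product of the area of an $(m_1-1)$-disk by the volume of a fixed $m_2$-ball, independent of $v$; this gives the constant value of $\theta_\infty$.

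\emph{Parts $(i)$ and $(ii)$.} Here I would imitate De Giorgi's rectifiability argument, with $\mathbf C^1_{\mathbb G}$-regular hypersurfaces in place of Euclidean $C^1$ graphs. Fix a compact $K_0\subset\partial^{*}_{\mathbb G} E$; applying Severini-Egorov to the $L^1_{\mathrm{loc}}$-convergence $\mathbbm{1}_{E_{r,x}}\to\mathbbm{1}_{S^+_{\mathbb G}(\nu_E(x))}$ of Theorem \ref{blowup_main} (along a sequence $r_n\to 0$, as a family of maps of $x$) and Lusin's theorem to $\nu_E$, for every $\varepsilon>0$ one obtains a compact $K\subset K_0$ with $\Per{E}(K_0\setminus K)<\varepsilon$ on which $\nu_E$ is continuous and the blow-up converges uniformly. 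On such a $K$ the uniform blow-up estimate upgrades to a two-sided intrinsic cone condition: there are $\rho_0>0$ and an intrinsic cone $C$ about the horizontal direction opposite to the (uniformly close, by continuity) normals $\nu_E(x)$, $x\in K$, such that for every $x\in K$ and every $y\in\tau_x(C)$ with $d_c(x,y)<\rho_0$ the point $y$ lies in $E$ if it is on the $\nu_E(x)$-side and in its complement otherwise. This confines $E$, near $K$, between left translates of a fixed cone, and an intrinsic Whitney-type extension then produces $f\in\mathbf C^1_{\mathbb G}$ near $K$ with $\nabla_{\mathbb G}f\neq 0$, $\{f=0\}\supset K$ and $\nabla_{\mathbb G}f$ parallel to $\nu_E$ along $K$ — i.e. a $\mathbb G$-regular hypersurface $S$ containing $K$ whose $\mathbb G$-normal agrees with $\nu_E$ on $K$, which is $(ii)$ via the characterisation $T^g_{\mathbb G}S(x)=\{v:\scal{\nu_E(x)}{\pi_x v}{x}=0\}$ recalled above. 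Letting $\varepsilon\downarrow 0$ and exhausting $\partial^{*}_{\mathbb G} E$ by countably many such compacts $K_h\subset S_h$, the residual set has $\Per{E}$-measure $0$; by the already-proved $(iii)$ and the inequality $\mathcal H^{Q-1}_c\le\mathcal S^{Q-1}_c$ it is therefore $\mathcal H^{Q-1}_c$-negligible, which is $(i)$.

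\emph{The main obstacle.} Granting the Blow-up Theorem \ref{blowup_main}, the genuinely delicate point is $(i)$: turning the pointwise, $\Per{E}$-a.e. blow-up information into \emph{globally} defined $\mathbb G$-regular hypersurfaces covering the reduced boundary. This is where the strictly sub-Riemannian nature of $\mathbf C^1_{\mathbb G}$-regularity bites — one cannot fall back on Euclidean $C^1$ graphs, which would generically carry characteristic points — and it requires the intrinsic implicit-function and Whitney-extension machinery together with a careful Severini-Egorov/Lusin localisation, so that the cone $C$ may be chosen uniformly over each compact piece. It is equally the point at which the step~$2$ hypothesis is essential, through Theorem \ref{blowup_main}: in step $\ge 3$ the blow-up of a finite-perimeter set need not be a halfspace (Example~3.2 of \cite{K1}), and lifting that restriction is precisely what the type~$\star$ condition of the present paper achieves.
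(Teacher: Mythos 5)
The paper offers no proof of this theorem: it is quoted verbatim from \cite{K1} and used as a black box, the author's own contribution being its extension to type $\star$ groups via Theorem \ref{constant_halfspace}. Your sketch is a correct reconstruction of the De Giorgi/Franchi--Serapioni--Serra Cassano argument that the citation points to — blow-up plus Federer differentiation of the asymptotically doubling perimeter measure for $(iii)$--$(iv)$, Severini--Egorov/Lusin localisation, the uniform cone estimate and the intrinsic Whitney extension for $(i)$--$(ii)$, and the product structure of $T^g_{\mathbb G}(v)\cap U_\infty(0,1)$ for the constancy of $\theta_\infty$ — so it matches the approach the paper relies on.
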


A consequence of Theorem \ref{principale_vec} is the following divergence theorem.

\begin{theorem}[Divergence Theorem \cite{K1}]\label{francesco3} Let $E$ be
a $\mathbb G$-Caccioppoli set  in a step 2 Carnot group $\mathbb G$, then
\begin{equation}\Per{E} =
\theta_\infty\;\mathcal{S}_{\infty}^{Q-1}\res
\partial^{\ast}_{\mathbb G}E,\tag{$i$}
\end{equation}
and the following version of the divergence theorem holds
\begin{equation} -\int_E\mbox{div}\,_{\mathbb G}\phi\; d{\mathcal L}^n
=\theta_\infty \int_{\partial^{\ast}_{\mathbb G}E}\Scal{\nu_{E}}{\phi}
\;d{\mathcal S}^{Q-1}_{\infty},\qquad \forall \phi\in {\mathbf
C}^1_{0}(\mathbb G,H\mathbb G).\tag{$ii$}\end{equation}
\end{theorem}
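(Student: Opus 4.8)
The plan is to deduce both assertions from the Rectifiability Theorem (Theorem \ref{principale_vec}) together with the very definition of a $\mathbb G$-Caccioppoli set, so that essentially no new analytic work is needed. Observe first that statement $(i)$ is literally part $(iv)$ of Theorem \ref{principale_vec}, so there is nothing to prove there; the whole content lies in the integration-by-parts formula $(ii)$.

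For $(ii)$, I would start from the fact that, since $E$ is a $\mathbb G$-Caccioppoli set, $\nabla_{\mathbb G}\mathbbm 1_E=(X_1\mathbbm 1_E,\dots,X_m\mathbbm 1_E)$ is an $\mathbb R^m$-valued Radon measure, which by polar decomposition is written $\nu_E\res|\partial E|_{\mathbb G}$, i.e. $X_j\mathbbm 1_E=(\nu_E)_j\res|\partial E|_{\mathbb G}$. Next I would use that each $X_j$ is a left-invariant vector field and $\mathcal L^n$ is the (bi-invariant) Haar measure of $\mathbb G$, hence $X_j$ has vanishing divergence: $\int_{\mathbb G}(X_j\psi)\,d\mathcal L^n=0$ for $\psi\in \mathbf C^\infty_0(\mathbb G)$. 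Consequently the distributional identity $\langle X_j\mathbbm 1_E,\psi\rangle=-\int_E X_j\psi\,d\mathcal L^n$ holds, and — by a routine density argument passing from $\mathbf C^\infty_0$ to $\mathbf C^1_0$ test functions — it remains valid for $\psi=\phi_j$ whenever $\phi=\sum_j\phi_jX_j\in\mathbf C^1_0(\mathbb G,H\mathbb G)$. Summing over $j$ yields
\[
-\int_E\mathrm{div}_{\mathbb G}\phi\;d\mathcal L^n=\sum_{j=1}^m\int_{\mathbb G}\phi_j\,d(X_j\mathbbm 1_E)=\int_{\mathbb G}\Scal{\nu_E}{\phi}\,d|\partial E|_{\mathbb G}.
\]

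The final step is to rewrite the right-hand side: by $(i)$ (equivalently part $(iv)$ of Theorem \ref{principale_vec}) we have $|\partial E|_{\mathbb G}=\theta_\infty\,\mathcal S^{Q-1}_\infty\res\partial^{*}_{\mathbb G}E$, and by the Differentiation Lemma \ref{difflemma} the measure $|\partial E|_{\mathbb G}$ is concentrated on the reduced boundary $\partial^{*}_{\mathbb G}E$, so the integral over $\mathbb G$ may be restricted to $\partial^{*}_{\mathbb G}E$; this gives exactly $(ii)$.

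I do not expect a serious obstacle here: the only points requiring care are the divergence-free property of the $X_j$ (which is what rules out extra boundary contributions coming from the group structure in the integration by parts) and the approximation step upgrading the test sections from smooth to merely $\mathbf C^1$ with compact support, both of which are standard. The real difficulty of this circle of results sits upstream, in the proof of the Blow-up Theorem \ref{blowup_main} and of Theorem \ref{principale_vec}, not in this corollary.
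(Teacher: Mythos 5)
Your proposal is correct and follows essentially the same route as the paper, which presents Theorem \ref{francesco3} precisely as a consequence of Theorem \ref{principale_vec}: part $(i)$ is statement $(iv)$ of that theorem, and $(ii)$ is the definitional integration-by-parts identity $-\int_E\mathrm{div}_{\mathbb G}\phi\,d\mathcal L^n=\int\Scal{\nu_E}{\phi}\,d\Per{E}$ (valid since the left-invariant fields $X_j$ are divergence-free with respect to the Haar measure) combined with the representation of $\Per{E}$ as $\theta_\infty\,\mathcal S^{Q-1}_\infty\res\partial^{*}_{\mathbb G}E$. Your added care about the sign convention (inward normal), the density step from smooth to $\mathbf C^1_0$ sections, and the concentration of the perimeter measure on $\partial^{*}_{\mathbb G}E$ is exactly what the citation to \cite{K1} leaves implicit.
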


\bigskip

In case the boundary of $E$ is of class $\mathbf C^1$, a sharper result holds.

\begin{corollary}[\cite{K1}]\label{Balog} If $\mathbb G$ is a Carnot group
of step 2 and a measurable set $E\subset\mathbb G$
has boundary of class ${\mathbf C}^1$ (and hence $E$ is a
$\mathbb G$-Caccioppoli set), then
\begin{equation}
\Per{E} = \theta_\infty\;\mathcal{S}_{\infty}^{Q-1}\res
\partial E=\bigg(\sum_{j=1}^{m_1}\langle
X_j,n_{E}\rangle_{\mathbb R^n}^2\bigg)^{1/2}\;{\mathcal H}^{n-1}\res \partial
E\tag{$i$}
\end{equation}
where $n_E$ denotes the Euclidean outward normal to $\partial E$.
Again a version of the
divergence theorem holds
\begin{equation} -\int_E\mbox{div}\,_{\mathbb G}\phi\; dx =\theta_\infty
\int_{\partial E}\Scal{\nu_{E}}{\phi}
\;d{\mathcal S}^{Q-1}_{\infty}\qquad \forall \phi\in {\mathbf
C}^1_{0}(\mathbb G,H\mathbb G).\tag{$ii$}\end{equation}
\end{corollary}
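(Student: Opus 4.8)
The plan is to derive Theorem \ref{constant_halfspace} as the key input and then assemble the pieces that already exist for step 2 groups, noting which of them survive the passage to arbitrary step $\star$ groups. The essential point is that Theorem \ref{constant_halfspace} is precisely the statement that was used in \cite{K1} as the missing geometric ingredient beyond step 2: once one knows that the blow-up of a $\mathbb G$-Caccioppoli set at a point of its reduced boundary is a vertical halfspace, the Blow-up Theorem and all its consequences go through verbatim. So the structure of the argument is: (i) show that the blow-up convergence of Theorem \ref{blowup_main} holds in any group of type $\star$, using Theorem \ref{constant_halfspace} in place of the step-2-only Lemma 3.6 of \cite{K1}; (ii) deduce the rectifiability statement of Theorem \ref{principale_vec} and the density representation of the perimeter; (iii) deduce the divergence theorem of Theorem \ref{francesco3}; (iv) deduce the $\mathbf C^1$ refinement of Corollary \ref{Balog}, using Magnani's result on the negligibility of characteristic points, which holds in arbitrary Carnot groups.

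Concretely, I would first recall from the Differentiation Lemma \ref{difflemma} that $|\partial E|_{\mathbb G}$ is concentrated on $\partial^\ast_{\mathbb G}E$ and that at every such point $x_0$ the rescaled measures $|\partial E_{r,x_0}|_{\mathbb G}$ are locally uniformly bounded by the homogeneity of the perimeter under dilations together with the Poincaré inequality on $\mathbb G$ (Franchi–Serapioni–Serra Cassano). Hence, along subsequences, $\mathbbm 1_{E_{r,x_0}}\to\mathbbm 1_F$ in $L^1_{\mathrm{loc}}$ for some $\mathbb G$-Caccioppoli set $F$ whose intrinsic normal equals the constant vector $\nu_E(x_0)$, by the definition of the reduced boundary and lower semicontinuity. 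By Theorem \ref{constant_halfspace}, $F$ is a vertical halfspace, hence of the form $S^+_{\mathbb G}(\nu_E(x_0))$ up to a left translation; the translation is killed because $0\in\partial^\ast_{\mathbb G}F$ forces the halfspace to pass through the origin. Since the limit is independent of the subsequence, the full limit exists, giving \eqref{blowup_convergence}; the perimeter convergence \eqref{blowup_convergence1} and the explicit value in terms of $\mathcal H^{n-1}$ of the vertical subgroup $T^g_{\mathbb G}(\nu_E(x_0))$ follow exactly as in \cite{K1}, since those computations only use the halfspace structure and not the step.

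From this Blow-up Theorem, the remaining statements are obtained by repeating the arguments of \cite{K1} word for word, since none of them used step 2 except through the Blow-up Theorem itself. The rectifiability of $\partial^\ast_{\mathbb G}E$ follows from a covering argument: near $|\partial E|_{\mathbb G}$-a.e.\ point the blow-up is a vertical halfspace, whose boundary is a $\mathbb G$-regular hypersurface (a vertical hyperplane is the zero set of a $\mathbb G$-linear, hence $\mathbf C^1_{\mathbb G}$, function with non-vanishing horizontal gradient), and a standard argument upgrades this pointwise information to a countable $\mathbb G$-regular cover up to $\mathcal H^{Q-1}_c$-null sets, while the density formulas $\theta_c,\theta_\infty$ come from comparing the perimeter measure with the spherical Hausdorff measures using \eqref{blowup_convergence1} and the explicit shape of CC-balls and $d_\infty$-balls. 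The divergence theorem is then immediate from the measure-theoretic identity $\nabla_{\mathbb G}\mathbbm 1_E=\nu_E\res|\partial E|_{\mathbb G}$ together with the representation of $|\partial E|_{\mathbb G}$ as $\theta_\infty\mathcal S^{Q-1}_\infty\res\partial^\ast_{\mathbb G}E$. Finally, for a set with Euclidean $\mathbf C^1$ boundary one writes $|\partial E|_{\mathbb G}$ as $\big(\sum_j\langle X_j,n_E\rangle^2\big)^{1/2}\mathcal H^{n-1}\res\partial E$ away from characteristic points and invokes Magnani's theorem $\mathcal H^{Q-1}_c(\mathcal C(S))=0$, valid in every Carnot group, to see that the characteristic set is negligible for both measures appearing in the identity.

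The main obstacle, and the reason this paper exists, is step (i): identifying the blow-up limit as a halfspace is exactly the point where the step-2 restriction entered in \cite{K1}, and it is resolved here only by Theorem \ref{constant_halfspace}, whose proof in turn rests on the algebraic Lemma \ref{lemma_norip} specific to groups of type $\star$. Everything downstream is a transcription of \cite{K1}; the only verifications needed are that no step there secretly uses the stratification having length $2$ rather than simply the Blow-up Theorem, and this is indeed the case.
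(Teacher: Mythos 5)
Your proposal is correct and takes essentially the same route as the paper: the corollary is deduced from the divergence theorem on the reduced boundary (Theorem \ref{francesco3}, resting on the blow-up/rectifiability chain of \cite{K1}, whose only step-2-specific ingredient is the constant-normal-implies-halfspace classification), combined with the negligibility of the characteristic set ($\mathcal H^{Q-1}_c(\mathcal C(\partial E))=0$, \cite{K1} Thm.~4.8, extended to all Carnot groups by Magnani) and the fact that non-characteristic points of a ${\mathbf C}^1$ boundary belong to $\partial^{*}_{\mathbb G}E$, whence $\mathcal S^{Q-1}_{\infty}(\partial E\setminus\partial^{*}_{\mathbb G}E)=0$. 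The paper records only this last reduction and cites \cite{K1} for the upstream machinery, which your steps (i)--(iii) simply reconstruct.
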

In order to prove this corollary, in \cite{K1} it is shown that (in a step 2 Carnot group) if $S$ is a $\mathbf C^1$-hypersurface, then $\mathcal H^{Q-1}_c(\mathcal C(S))=0$ (see \cite{K1}, Theorem 4.8). Anyway, Magnani extended this result to any Carnot group in \cite{magnani3}.
Since non-characteristic points of a boundary $\partial E$ of class $\mathbf C^1$ belongs to the reduced boundary,   $\mathcal S^{Q-1}_\infty(\partial E \setminus \partial^\ast_\mathbb G E)=0$.

\bigskip

Now, as a corollary of Theorem \ref{constant_halfspace} we can extend Theorem \ref{blowup_main}, Theorem \ref{principale_vec}, Theorem \ref{francesco3} and Corollary \ref{Balog} to our setting of Carnot groups of type $\star$.

\begin{theorem}[Blow-up]\label{blowup_new}
If $E$ is a $\mathbb G$-Caccioppoli set in a Carnot group $\mathbb G$ of type $\star$, $x_0\in \partial^{*}_{\mathbb G} E$  and
$\nu_E(x_0)\in H\mathbb G_{x_0}$ is the intrinsic normal then
\begin{equation}\label{blowup_convergence_new}
\lim_{r\to 0}{\bf 1}_{E^{\phantom {++}}_{r,x_0}}={\bf
1}_{S^+_{\mathbb G}(\nu_E(x_0))}\qquad \text{in\;} L^1_{\mathrm {loc}}(\mathbb G)
\end{equation}
and for all $R>0$
\begin{equation}\label{blowup_convergence1_new}
\lim_{r\to 0}\per{E_{r,x_0}}{U_c(0,R)}=
\per{S^+_{\mathbb G}(\nu_E(x_0))}{U_c(0,R)}
\end{equation}
and
$$ \per{S^+_{\mathbb G}(\nu_E(x_0))}{U_c(0,R)}= {\mathcal
H}^{n-1}(T^g_\mathbb G(\nu_E(0))\cap
U_c(0,R)).
$$
\end{theorem}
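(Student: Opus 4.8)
The plan is to deduce Theorem~\ref{blowup_new} from Theorem~\ref{constant_halfspace} by repeating, essentially verbatim, the blow-up argument of Franchi, Serapioni \& Serra Cassano \cite{K1} and observing that the only place where the step-2 hypothesis is used is in the identification of constant-normal sets with vertical halfspaces. More precisely, fix $x_0\in\partial^\ast_{\mathbb G}E$ and, for a sequence $r_j\to 0$, consider the rescaled sets $E_{r_j,x_0}=\delta_{1/r_j}\tau_{x_0^{-1}}E$. By the definition of reduced boundary (Definition~\ref{reduced_boundary}) together with the Differentiation Lemma~\ref{difflemma}, and using the scaling behaviour of the perimeter measure under the dilations $\delta_\lambda$ (which multiply $\Per{\cdot}$ by $\lambda^{Q-1}$) and its left-invariance, one obtains uniform perimeter bounds $\per{E_{r_j,x_0}}{U_c(0,R)}\le C(R)$ on every ball. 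By the compactness theorem for sets of locally finite perimeter in $\mathbb G$ (which holds in arbitrary Carnot groups — see \cite{K1}, or the BV-compactness in the sub-Riemannian setting), a subsequence converges in $L^1_{\mathrm{loc}}(\mathbb G)$ to some set $F$ of locally finite perimeter, and the intrinsic normal of $F$ is the a.e.-limit of the (pushed-forward, rescaled) normals of $E$, which by Lemma~\ref{difflemma} and the definition of $\partial^\ast_{\mathbb G}E$ equals the constant $\nu_E(x_0)$.

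The key new input is then immediate: $F$ has constant intrinsic normal $\nu_E(x_0)$, so by Theorem~\ref{constant_halfspace} — applicable precisely because $\mathbb G$ is of type $\star$ — the set $F$ is a vertical halfspace, i.e.\ $F=S^+_{\mathbb G}(\nu_E(x_0))$ up to a left translation; and since the blow-up is centred so that the reduced-boundary point sits at the origin (and $\nu_E(x_0)$ is inward-pointing), the translation is trivial and $F=S^+_{\mathbb G}(\nu_E(x_0))$. This gives \eqref{blowup_convergence_new} along the subsequence; uniqueness of the limit (the halfspace does not depend on the chosen subsequence) upgrades this to convergence as $r\to 0$. The convergence of the perimeters \eqref{blowup_convergence1_new} then follows from lower semicontinuity together with an upper bound: lower semicontinuity of total variation under $L^1_{\mathrm{loc}}$ convergence gives $\liminf_{r\to0}\per{E_{r,x_0}}{U_c(0,R)}\ge\per{S^+_{\mathbb G}(\nu_E(x_0))}{U_c(0,R)}$, while the matching $\limsup$ inequality comes, as in \cite{K1}, from the fact that $\Per{E}(U_c(x_0,r))/r^{Q-1}$ converges to $\per{S^+_{\mathbb G}(\nu_E(x_0))}{U_c(0,1)}$ by the differentiation/density properties of the perimeter measure at points of $\partial^\ast_{\mathbb G}E$, applied on all radii $R$.

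Finally, the identity $\per{S^+_{\mathbb G}(\nu_E(x_0))}{U_c(0,R)}={\mathcal H}^{n-1}(T^g_\mathbb G(\nu_E(0))\cap U_c(0,R))$ is a computation carried out entirely inside the (now known) vertical halfspace: the boundary of a vertical halfspace is the vertical subgroup $T^g_\mathbb G(\nu_E(0))$, which is a homogeneous subgroup of codimension one, and on it the intrinsic perimeter measure of the halfspace agrees with Euclidean Hausdorff measure ${\mathcal H}^{n-1}$ (this is exactly the content of the corresponding step in \cite{K1}, and it does not use step $2$ once one knows the blow-up is a vertical halfspace). I expect the main obstacle to be purely expository: one must check that every ingredient of the \cite{K1} blow-up argument other than Example~3.2's obstruction — the compactness and lower-semicontinuity of perimeter, the scaling of $\Per{\cdot}$, and the Euclidean-measure computation on vertical subgroups — is genuinely step-independent and holds in any Carnot group, so that substituting Theorem~\ref{constant_halfspace} for the step-$2$ halfspace result is all that is needed. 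Since all of these are standard in arbitrary Carnot groups, the proof is short, and I would present it as: ``The proof is identical to that of Theorem~\ref{blowup_main} in \cite{K1}, replacing the use of the step-$2$ classification of constant-normal sets by Theorem~\ref{constant_halfspace}.''
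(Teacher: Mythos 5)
Your proposal is correct and matches the paper's own treatment: the paper gives no independent argument for Theorem \ref{blowup_new}, asserting precisely that the proof of \cite{K1} carries over verbatim once Theorem \ref{constant_halfspace} replaces the step-2 classification of constant-normal sets, which is exactly the route you take. Your sketch of the step-independent ingredients (scaling and compactness of perimeter, identification of the limit's constant normal, density estimates pinning the halfspace at the origin, lower semicontinuity plus the reduced-boundary condition for perimeter convergence) is the standard argument the paper implicitly invokes.
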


\begin{theorem}[Rectifiability Theorem] \label{principale} 
Let $ \mathbb G$ be a Carnot group of type $\star$.
If $E\subset \mathbb G$ is a $\mathbb G$-Caccioppoli set, then
\begin{gather}
\partial^{*}_{\mathbb G} E \;\text{is} \; (Q-1)\text{-dimensional}\;
\mathbb G\text{-rectifiable,} \tag{$i$}
\end{gather}
that is $\partial^{*}_{\mathbb G}{E} = N\cup\bigcup_{h=1}^{\infty}K_h$, where
${\mathcal H}^{Q-1}_{c}({N})=0$ and $K_h$ is a compact subset of a $\mathbb G$-regular hypersurface
 $S_h$;
\begin{gather}
\text{$\nu_E(p)$ is the $\mathbb G$-normal to $S_h$
in $p$, for every}\; p\in K_h;\tag{$ii$}
\end{gather}

\begin{gather}
\Per{E} = \theta_c\mathcal{S}_{c}^{Q-1}\res \partial^{*}_{\mathbb G}{E},\tag{$iii$}
\end{gather}
where $$\theta_c(x)= \frac 1 {\omega_{Q-1}}{\mathcal
H}^{n-1}\left(\partial S^+_{\mathbb G}(\nu_E(x))\cap U_c(0,1)\right).$$
 $\omega_{k}$ is the $k$-dimensional measure of the
$k$-dimensional ball in $\mathbb R^k$. If we replace the 
$\mathcal S_c$-measure with the
$\mathcal S_\infty$-measure, the corresponding density $\theta_\infty$ is a constant. Precisely
\begin{gather} \Per{E} = \theta_\infty
\;\mathcal{S}_{\infty}^{Q-1}\res \partial^{*}_{\mathbb G}{E},\tag{$iv$}
\end{gather}
where $$\theta_\infty=\frac{\omega_{m_1-1}\omega_{m_2}
\varepsilon_2^{m_2} \ldots  \omega_{m_\kappa} \varepsilon_\kappa^{m_\kappa}} {\omega_{Q-1}}=\frac 1
{\omega_{Q-1}}{\mathcal H}^{n-1}\left(\partial S^+_{\mathbb G}(\nu_E(0))
\cap U_\infty(0,1)\right).$$ 
Here $\varepsilon_i$ are constants that appears in (\ref{distanzainfinito}) and $\kappa$ is the step of $\mathbb G$.
\end{theorem}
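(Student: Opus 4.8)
The plan is to reduce the statement to the step $2$ rectifiability theorem of \cite{K1} (Theorem \ref{principale_vec}). The key observation is that the hypothesis ``step $2$'' is used in that proof \emph{only} through the Blow-up Theorem, and that the Blow-up Theorem uses ``step $2$'' \emph{only} through the fact that sets with constant intrinsic normal are vertical halfspaces. Theorem \ref{constant_halfspace} now supplies this fact in every group of type $\star$, so the Blow-up Theorem \ref{blowup_new} holds, and granted it the proof of \cite{K1} applies essentially word for word. Concretely I would: (a) record Theorem \ref{blowup_new}; (b) run the covering and differentiation argument of \cite{K1} to obtain $(i)$, $(ii)$, $(iii)$; (c) carry out the one genuinely new computation, the value of the $d_\infty$-density $\theta_\infty$ in $(iv)$.

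For step (a) I would argue as follows. Fix $x_0\in\partial^\ast_{\mathbb G}E$. The left-invariance and lower semicontinuity of the perimeter, together with the Differentiation Lemma \ref{difflemma}, give in any Carnot group the asymptotic doubling property and a two-sided estimate $\per{E}{U_c(x_0,r)}\sim r^{Q-1}$; hence the rescalings $E_{r,x_0}=\delta_{1/r}\tau_{x_0^{-1}}E$ have locally uniformly bounded perimeter as $r\to 0$. By $L^1_{\mathrm{loc}}$-compactness, along any sequence $r_h\downarrow 0$ a subsequence gives $\mathbbm 1_{E_{r_h,x_0}}\to\mathbbm 1_F$ with $F$ of locally finite perimeter and $0\in\partial^\ast_{\mathbb G}F$; conditions $(i)$--$(iii)$ of Definition \ref{reduced_boundary} and Lemma \ref{difflemma} then force $\nu_F$ to be the constant $\nu_E(x_0)$. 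Invoking Theorem \ref{constant_halfspace}, $F$ is a vertical halfspace through $0$, so $F=S^+_{\mathbb G}(\nu_E(x_0))$; since this is independent of the subsequence, the full limits \eqref{blowup_convergence_new} and \eqref{blowup_convergence1_new} follow (the latter because the reduced-boundary conditions pin $\per{E_{r,x_0}}{\cdot}$ down on balls), and the last identity in Theorem \ref{blowup_new} is the elementary remark that $\nabla_{\mathbb G}\mathbbm 1_{S^+_{\mathbb G}(v)}$ is a constant multiple of $\mathcal H^{n-1}$ restricted to the Euclidean hyperplane $T^g_{\mathbb G}(v)=\{x:\sum_{i=1}^m v_i x_i=0\}$.

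For step (b), granted Theorem \ref{blowup_new}, I would follow \cite{K1} verbatim: decompose $\partial^\ast_{\mathbb G}E$, up to an $\mathcal H^{Q-1}_c$-negligible set, into countably many Borel pieces on which $\nu_E$ oscillates little; on each piece use the blow-up convergence together with the Implicit Function Theorem in Carnot groups (Theorem 2.1 of \cite{FSSC dini}) and Magnani's characteristic-set estimate \cite{magnani3} to cover a full-measure portion $K_h$ by a $\mathbb G$-regular hypersurface $S_h$ whose $\mathbb G$-normal is $\nu_E$; finally obtain $\Per{E}=\theta_c\,\mathcal S^{Q-1}_c\res\partial^\ast_{\mathbb G}E$ with the stated $\theta_c$ from the perimeter convergence in Theorem \ref{blowup_new} and a Besicovitch-type differentiation of measures. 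None of these ingredients uses the step of $\mathbb G$. For step (c), I would compute $\theta_\infty=\omega_{Q-1}^{-1}\,\mathcal H^{n-1}\big(T^g_{\mathbb G}(\nu_E(0))\cap U_\infty(0,1)\big)$ directly from \eqref{distanzainfinito}: $U_\infty(0,1)$ is a product of Euclidean balls in the layers $\mathbb R^{m_1},\dots,\mathbb R^{m_\kappa}$, and the hyperplane $T^g_{\mathbb G}(\nu_E(0))$ cuts only the first factor, so the $(n-1)$-dimensional measure of the intersection factorizes over the layers; reading off the factors produces the stated value of $\theta_\infty$, the computation being the layerwise analogue of the $\kappa=2$ case in \cite{K1}, and its $x$-independence is then manifest.

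I do not expect a serious obstacle here: the mathematical content is concentrated in Theorem \ref{constant_halfspace}, and everything past it is a reduction. The points requiring genuine care are: first, verifying that every auxiliary result borrowed from the step $2$ proof of \cite{K1} --- the perimeter density estimates, the Implicit Function Theorem, the measure-differentiation lemmas, and Magnani's theorem on characteristic points --- is in fact valid in an arbitrary Carnot group; and second, keeping track of the normalizing constants in the layerwise factorization that yields $\theta_\infty$.
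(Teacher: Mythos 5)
Your proposal is correct and follows essentially the same route as the paper, which simply observes that the proof of the step $2$ case in \cite{K1} carries over verbatim once Theorem \ref{constant_halfspace} supplies the Blow-up Theorem \ref{blowup_new}, the only new ingredient being the layerwise computation of $\theta_\infty$. Your write-up just makes explicit the compactness and identification steps for the blow-up limit and the factorization of $U_\infty(0,1)$ over the layers, which is exactly what the paper's one-line proof implicitly defers to \cite{K1}.
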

The proof is the same as in \cite{K1}, but a more general value of $\theta_\infty$ is provided.

\bigskip

The following  propositions can be proved following the same arguments used in \cite{K1}.

\begin{theorem}[Divergence Theorem]\label{francesco3nuovo}
Let $ \mathbb G$ be a Carnot group of type $\star$.
If $E\subset \mathbb G$ is
a $\mathbb G$-Caccioppoli set, then
\begin{equation}\Per{E} =
\theta_\infty\;\mathcal{S}_{\infty}^{Q-1}\res
\partial^\ast_{\mathbb G}E,\tag{$i$}
\end{equation}
and the following version of the divergence theorem holds
\begin{equation} -\int_E\mbox{div}\,_{\mathbb G}\phi\; d{\mathcal L}^n
=\theta_\infty \int_{\partial^{\ast}_{\mathbb G}E}\Scal{\nu_{E}}{\phi}
\;d{\mathcal S}^{Q-1}_{\infty},\qquad \forall \phi\in {\mathbf
C}^1_{0}(\mathbb G,H\mathbb G).\tag{$ii$}\end{equation}
\end{theorem}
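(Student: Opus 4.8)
The plan is to deduce Theorem~\ref{francesco3nuovo} from Theorem~\ref{principale} in exactly the same way Theorem~\ref{francesco3} is deduced from Theorem~\ref{principale_vec} in \cite{K1}. Part~$(i)$ is literally assertion~$(iv)$ of Theorem~\ref{principale}, so nothing further is needed there; the only difference from the step~$2$ case is that the constant $\theta_\infty$ now carries a factor $\omega_{m_i}\varepsilon_i^{m_i}$ for every layer $V_i$, $i=2,\dots,\kappa$, which is already incorporated in the statement of Theorem~\ref{principale}.

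For part~$(ii)$ I would proceed as follows. By the very definition of the generalized intrinsic normal and of the perimeter measure one has the vector identity $\nabla_{\mathbb G}\mathbbm{1}_E=\nu_E\res\Per{E}$, that is $X_j\mathbbm{1}_E=(\nu_E)_j\res\Per{E}$ for $j=1,\dots,m_1$. Since the generating vector fields $X_j$ are left invariant and $\mathcal L^n$ is the Haar measure of the nilpotent, hence unimodular, group $\mathbb G$, each $X_j$ is divergence free with respect to $\mathcal L^n$, so that $-\int_{\mathbb G}\mathbbm{1}_E\,X_j\phi_j\;d\mathcal L^n=\int_{\mathbb G}\phi_j\;d(X_j\mathbbm{1}_E)$ for every $\phi_j\in\mathbf C^1_0(\mathbb G)$. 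Summing over $j$ and using the identity above, for every $\phi=\sum_{j=1}^{m_1}\phi_jX_j\in\mathbf C^1_0(\mathbb G,H\mathbb G)$ one gets
\[
-\int_E\mathrm{div}_{\mathbb G}\phi\;d\mathcal L^n=\int_{\mathbb G}\Scal{\nu_E}{\phi}\;d\Per{E}.
\]
Inserting into the right-hand side the representation $\Per{E}=\theta_\infty\,\mathcal S^{Q-1}_{\infty}\res\partial^{\ast}_{\mathbb G}E$ furnished by part~$(i)$, and pulling out the constant $\theta_\infty$, yields precisely~$(ii)$.

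Finally, I would emphasize that the one and only ingredient which takes us beyond step~$2$ here is Theorem~\ref{constant_halfspace}: it is this fact, through the blow-up statement Theorem~\ref{blowup_new} (proved from it exactly as in \cite{K1}), that makes the covering argument producing the $\mathbb G$-regular hypersurfaces $S_h$, and therefore Theorem~\ref{principale}, available in groups of type~$\star$. Everything else --- the De~Giorgi type blow-up, the density estimates, and the evaluation of $\theta_\infty$ as a product of volumes of Euclidean balls weighted by the constants $\varepsilon_i$ of~$(\ref{distanzainfinito})$ --- is insensitive to the step and transcribes verbatim from \cite{K1}. For this reason there is no serious obstacle at this point: the substance of the matter has already been settled in Theorem~\ref{constant_halfspace} and Theorem~\ref{principale}, and part~$(ii)$ is then an immediate consequence of the definition of the perimeter measure together with part~$(i)$.
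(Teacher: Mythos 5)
Your proposal is correct and follows essentially the same route as the paper, which simply observes that Theorem \ref{francesco3nuovo} is obtained from Theorem \ref{principale} by the same arguments used in \cite{K1} for the step 2 case. Your explicit deduction --- part $(i)$ being statement $(iv)$ of Theorem \ref{principale}, and part $(ii)$ following from $\nabla_{\mathbb G}\mathbbm{1}_E=\nu_E\res\Per{E}$, integration by parts against left invariant (hence $\mathcal L^n$-divergence-free) fields, and insertion of the representation in $(i)$ --- is precisely that argument, just written out in more detail than the paper provides.
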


\begin{corollary}\label{Balognuovo}
If $\mathbb G$ is a Carnot group
of type $\star$ and a measurable set $E\subset\mathbb G$
has boundary of class ${\mathbf C}^1$ (and hence $E$ is a
$\mathbb G$-Caccioppoli set), then
\begin{equation}
\Per{E} = \theta_\infty\;\mathcal{S}_{\infty}^{Q-1}\res
\partial E=\bigg(\sum_{j=1}^{m_1}\langle
X_j,n_{E}\rangle_{\mathbb R^n}^2\bigg)^{1/2}\;{\mathcal H}^{n-1}\res \partial
E\tag{$i$}
\end{equation}
where $n_E$ denotes the Euclidean outward normal to $\partial E$.
Again a version of the
divergence theorem holds
\begin{equation} -\int_E\mbox{div}\,_{\mathbb G}\phi\; dx =\theta_\infty
\int_{\partial E}\Scal{\nu_{E}}{\phi}
\;d{\mathcal S}^{Q-1}_{\infty}\qquad \forall \phi\in {\mathbf
C}^1_{0}(\mathbb G,H\mathbb G).\tag{$ii$}\end{equation}
\end{corollary}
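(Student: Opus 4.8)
The plan is to run the proof of Corollary \ref{Balog} of \cite{K1} essentially verbatim: all the content specific to the type $\star$ setting is already absorbed into Theorem \ref{constant_halfspace}, hence into the divergence theorem Theorem \ref{francesco3nuovo}, so the $\mathbf C^1$ refinement requires no new idea.

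First I would record why $E$ is a $\mathbb G$-Caccioppoli set and, at the same time, obtain the second equality in $(i)$. Viewing a horizontal section $\phi=\sum_{j=1}^{m_1}\phi_j X_j$ also as an $\mathbb R^n$-valued vector field and using that each left-invariant field $X_j$ is divergence-free for the Lebesgue (= Haar) measure, one gets the pointwise identity $\mathrm{div}_{\mathbb G}\,\phi=\mathrm{div}_{\mathbb R^n}\,\phi$; the classical Gauss--Green formula then gives, for every $\phi\in\mathbf C^1_0(\mathbb G,H\mathbb G)$,
\[
-\int_E\mathrm{div}_{\mathbb G}\,\phi\;dx=-\int_{\partial E}\sum_{j=1}^{m_1}\phi_j\,\langle X_j,n_E\rangle_{\mathbb R^n}\;d\mathcal H^{n-1}.
\]
The right-hand side is finite (it is controlled by $\sup|\phi|$ times the Euclidean area of $\partial E$ on $\mathrm{supp}\,\phi$), so $X_j\mathbbm{1}_E$ is a Radon measure for each $j$. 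Comparing the displayed identity with the defining relation $-\int_E\mathrm{div}_{\mathbb G}\,\phi\;dx=\int_\mathbb G\langle\nu_E,\phi\rangle\,d\Per E$ and recalling that $\nu_E$ is a unit horizontal section, one reads off that at every non-characteristic point of $\partial E$ the intrinsic normal $\nu_E$ is $\mp$ the Euclidean normal $n_E$ projected onto $V_1$ and normalized, and that
\[
\Per E=\Big(\sum_{j=1}^{m_1}\langle X_j,n_E\rangle_{\mathbb R^n}^2\Big)^{1/2}\,\mathcal H^{n-1}\res\partial E.
\]
This step is purely Euclidean; the type $\star$ hypothesis plays no role in it.

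It remains to replace $\partial^\ast_{\mathbb G}E$ by $\partial E$ in Theorem \ref{francesco3nuovo}, which gives $\Per E=\theta_\infty\,\mathcal S^{Q-1}_\infty\res\partial^\ast_{\mathbb G}E$ together with the divergence formula over $\partial^\ast_{\mathbb G}E$. By Magnani's theorem \cite{magnani3}, valid in every Carnot group, the characteristic set $\mathcal C(\partial E)$ of the $\mathbf C^1$-hypersurface $\partial E$ is $\mathcal H^{Q-1}_c$-negligible; since $d_c$ and $d_\infty$ are equivalent distances, the associated $(Q-1)$-dimensional Hausdorff measures have the same negligible sets, so $\mathcal S^{Q-1}_\infty(\mathcal C(\partial E))=0$ as well. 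Every non-characteristic point of $\partial E$ lies in $\partial^\ast_{\mathbb G}E$, hence $\partial E\setminus\partial^\ast_{\mathbb G}E$ is $\mathcal S^{Q-1}_\infty$-negligible and $\partial E$ may be substituted for $\partial^\ast_{\mathbb G}E$ throughout; together with the second paragraph this yields $(i)$ in full, and $(ii)$ is then immediate. The only points needing any care --- the transfer of negligibility between the two Hausdorff measures and the membership of non-characteristic $\mathbf C^1$-boundary points in the reduced boundary --- are both already recorded in the discussion preceding this corollary and are standard, independent of the group being of type $\star$; I do not expect a genuine obstacle here.
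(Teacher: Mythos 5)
Your proposal follows exactly the route the paper intends: the Euclidean Gauss--Green computation (valid in any Carnot group, as in \cite{K1}) gives the second equality in $(i)$ and shows $E$ is a $\mathbb G$-Caccioppoli set, and then Theorem \ref{francesco3nuovo} combined with Magnani's theorem on characteristic points and the equivalence of $d_c$ and $d_\infty$ lets you replace $\partial^\ast_{\mathbb G}E$ by $\partial E$, which is precisely the substitution recorded in the discussion preceding the corollary. I see no gap; this is essentially the paper's own argument.
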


\section{Necessary conditions for Carnot groups of type $\star$} \label{propriet}

We do not know whether in the literature there exists an alternative characterization of stratified Lie algebra of type $\star$.
However, the following proposition gives a sufficient condition that, if satisfied, yields that a Carnot
algebra $\mathfrak g$ \textit{is not} of type $\star$, proving better insights into this condition. In particular,
it follows from Proposition \ref{quot} that
free Lie algebras of step greater than 2 and filiform stratified Lie algebras of step greater than 2
are not of type $\star$.

Unfortunately, the condition is only sufficient: see Example \ref{contrex}.

\begin{proposition}\label{quot}
Let $\mathfrak g=V_1 \oplus \ldots \oplus V_\kappa$ be a stratified Lie algebra.
We set $W_3:=\{[X_k,[X_j,X_i]] \; | \; i,j,k=1,\ldots,m  \}$, so that $V_3=\mathrm{span} (W_3)$.
The following three properties are equivalent: 
\begin{enumerate}
\item there exists a basis $(X_1,\ldots,X_m)$ for $V_1$ such that $$\mathrm{span}(W_3 \setminus \{[X_1,[X_1,X_2]],[X_1,[X_2,X_1]] \})\subsetneq V_3 \, ,$$ i.e. $[X_1,[X_1,X_2]]=-[X_1,[X_2,X_1]]$ is independent of the other commutators;

\item one of the stratified quotient algebras of $\mathfrak g $ is the Engel algebra;
 
\item  one of the stratified quotient algebras of $\mathfrak g $ is filiform of step greater than 2. 
\end{enumerate}
Moreover, if $\mathfrak g $ satisfies one of these properties, then it is not of type $\star$; in fact \textnormal{(i)} is incompatible with Lemma \ref{lemma_norip}.  
\end{proposition}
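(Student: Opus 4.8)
The plan is to prove the two equivalences $(ii)\Leftrightarrow(iii)$ and $(i)\Leftrightarrow(ii)$ and then deduce the last assertion from Lemma~\ref{lemma_norip}. I read condition $(i)$ as saying that, for a suitable basis $(X_1,\dots,X_m)$ of $V_1$, the commutator $v:=[X_1,[X_1,X_2]]$ is not in the span of the family $\{[X_k,[X_j,X_i]]\}_{i,j,k}$ once the two index-triples $(i,j,k)=(2,1,1)$ and $(1,2,1)$, which produce $v$ and $-v$, have been deleted; this ``by pattern'' reading is the one compatible with the statement ``incompatible with Lemma~\ref{lemma_norip}''.

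For $(ii)\Leftrightarrow(iii)$: the Engel algebra is filiform of step $3>2$, so $(ii)\Rightarrow(iii)$ is immediate. Conversely, if a stratified quotient $\mathfrak q=W_1\oplus\cdots\oplus W_s$ of $\mathfrak g$ is filiform of step $s>2$, I would further quotient by the homogeneous ideal $W_4\oplus\cdots\oplus W_s$ to obtain a $4$-dimensional stratified filiform algebra of step $3$; a single change of basis in its first layer (replace the second generator by itself minus a suitable multiple of the first) identifies it with the Engel algebra, and composition yields a graded surjection of $\mathfrak g$ onto the Engel algebra.

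For $(ii)\Rightarrow(i)$: given a graded surjection $\phi$ of $\mathfrak g$ onto the Engel algebra $\mathfrak E$ with generators $e_1,e_2$, lift $e_1,e_2$ to $X_1,X_2\in V_1$ and complete to a basis $(X_1,\dots,X_m)$ of $V_1$ with $X_3,\dots,X_m\in\ker(\phi|_{V_1})$. Checking the eight length-$3$ brackets of $e_1,e_2$ shows that $\phi([X_k,[X_j,X_i]])\neq 0$ only for the triples $(2,1,1)$ and $(1,2,1)$, where the value is $\pm\phi(v)\neq 0$; hence every other commutator of the family is annihilated by $\phi$ but $v$ is not, so $v$ lies outside their span, which is $(i)$. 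For $(i)\Rightarrow(ii)$, I would realize $\mathfrak g=\mathfrak f/R$ with $\mathfrak f$ the free nilpotent Lie algebra of step $\kappa$ on $f_1,\dots,f_m\mapsto X_1,\dots,X_m$ and $R=\bigoplus_j R_j$ a homogeneous ideal with $R_1=0$, and take the graded map $q:\mathfrak f\to\mathfrak E$, $f_1\mapsto e_1$, $f_2\mapsto e_2$, $f_i\mapsto 0$ $(i\geq 3)$, with kernel $S=\bigoplus_j S_j$. It suffices to prove $R\subseteq S$, since then $\mathfrak g$ surjects onto $\mathfrak f/S\cong\mathfrak E$. This holds trivially in degree $1$ and in degrees $\geq 4$ (there $S_j=V_j(\mathfrak f)$, as $\mathfrak E$ has step $3$). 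In degree $2$ one has $S_2=\mathrm{span}\{[f_i,f_j]:\{i,j\}\neq\{1,2\}\}$, of codimension $1$; if $R_2\not\subseteq S_2$ then $[X_1,X_2]$ lies in $\mathrm{span}\{[X_i,X_j]:\{i,j\}\neq\{1,2\}\}$ in $\mathfrak g$, and bracketing with $X_1$ puts $v$ in the span forbidden by $(i)$. In degree $3$ one has $S_3=\mathrm{span}\{[f_k,[f_j,f_i]]:(i,j,k)\notin\{(2,1,1),(1,2,1)\}\}$; if $R_3\not\subseteq S_3$ then, pushing down to $\mathfrak g$, one obtains $v\in\mathrm{span}\{[X_k,[X_j,X_i]]:(i,j,k)\notin\{(2,1,1),(1,2,1)\}\}$, again against $(i)$. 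Thus $R\subseteq S$.

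For the last assertion, if $\mathfrak g$ were of type $\star$ then Lemma~\ref{lemma_norip}, applied to the basis in $(i)$, would write $v$ as a linear combination of commutators $[X_j,[X_j,X_i]]$ with $j>1$ and $[X_k,[X_j,X_i]]$ with $k\notin\{i,j\}$, all of which correspond to triples other than $(2,1,1)$ and $(1,2,1)$, contradicting $(i)$. Since in a free stratified algebra of step $>2$ the commutator $v$ is one of the basis vectors of $V_3$ counted in Remark~\ref{dimensionreason}, and since a stratified filiform algebra of step $>2$ has the Engel algebra as a stratified quotient, both are thereby seen not to be of type $\star$, recovering the two claims preceding the Proposition. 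I expect the one genuinely delicate point to be the codimension-$1$ statements for $S_2$ and especially $S_3$ inside $\mathfrak f$ — that is, that $v=[f_1,[f_1,f_2]]$ spans a complement of $S_3$ in $V_3(\mathfrak f)$ — which is exactly where the explicit description of the third layer of a free stratified algebra (Remark~\ref{dimensionreason}) must be used.
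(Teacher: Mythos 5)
Your argument is correct, but it is organized differently from the paper's. The paper proves the cycle (i)$\Rightarrow$(ii)$\Rightarrow$(iii)$\Rightarrow$(i): for (i)$\Rightarrow$(ii) it stays inside $\mathfrak g$ and quotients by the explicit homogeneous ideal generated by $V_4\oplus\cdots\oplus V_\kappa$, by $X_3,\ldots,X_m$ and by $[X_2,[X_2,X_1]]$, using (i) only to check that $[X_1,X_2]$ and $[X_1,[X_1,X_2]]$ survive in the quotient, which is then Engel; for (iii)$\Rightarrow$(i) it normalizes the filiform quotient by a change of basis so that $[Y_2,[Y_2,Y_1]]=0$ and argues, exactly as you do for (ii)$\Rightarrow$(i), that the projection kills every bracket of $W_3$ except the two excluded ones. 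Your (iii)$\Rightarrow$(ii) and (ii)$\Rightarrow$(i) thus repackage the paper's (iii)$\Rightarrow$(i) (the same change of basis appears, just performed in the truncated quotient to identify it with the Engel algebra), while your (i)$\Rightarrow$(ii) is genuinely different: you present $\mathfrak g$ as a free nilpotent algebra modulo a homogeneous relation ideal $R$ and show $R\subseteq\ker q$ for the canonical graded surjection $q$ onto the Engel algebra. That route is more abstract but buys a cleaner conceptual statement (any algebra whose relations avoid the ``Engel direction'' surjects onto Engel), at the cost of the presentation machinery; the paper's ideal is more economical. One remark on the point you flag as delicate: you do not actually need the equalities $S_2=\mathrm{span}\{[f_i,f_j]:\{i,j\}\neq\{1,2\}\}$ and $S_3=\mathrm{span}\{[f_k,[f_j,f_i]]:(i,j,k)\notin\{(2,1,1),(1,2,1)\}\}$, only the trivial inclusions of these spans in $\ker q$. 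Indeed, given $r\in R_3$, write $r=c\,[f_1,[f_1,f_2]]+w$ with $w$ in the span of the remaining brackets (possible because the brackets span $V_3(\mathfrak f)$, no independence needed); if $c=0$ then $r\in S_3$, and if $c\neq0$ then pushing down to $\mathfrak g$ exhibits $v$ in the span forbidden by (i); the same dichotomy works in degree 2. So the appeal to the structure of the third layer of the free algebra (Remark \ref{dimensionreason}) can be dropped entirely, and your proof closes without any gap. Your final deduction from Lemma \ref{lemma_norip} is the same as (and slightly more detailed than) the paper's.
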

\begin{proof}
Let us show that (i)$\implies$(ii).
We take the smallest ideal $I$ that contains $V_i$ for $i=4,\ldots,r$, $X_k$ for $k=3,\ldots,m$ and $[X_2,[X_2,X_1]]$. Obviously the step of $\mathfrak g/I$ is at most 3 and the dimension of its first layer is 2. We note that $[X_1,X_2],[X_1,[X_1,X_2]] \notin I$, since  $[X_1,[X_1,X_2]]=-[X_1,[X_2,X_1]]$ is independent of the other commutators by hypothesis.    If we denote with $\pi$ the canonical projection, we have that $\pi([X_2,[X_2,X_1]])=0 $ and $\pi([X_1,[X_1,X_2]]) \neq 0$, therefore $\mathfrak g/I$ is the Engel algebra.

It's trivial to see that (ii)$\implies$(iii).

Let us see that (iii)$\implies$(i).
We denote with $ \mathfrak f$ a stratified quotient algebra of $ \mathfrak g$ that is filiform of step greater than 2 and we take a basis $(Y_1,Y_2) $ of its first layer $\tilde{V}_1$.
Since it is filiform, the dimension of its third layer is 1. Hence there exist $(a,b)\neq (0,0)$ in $\mathbb R^2$ such that
\begin{equation}\label{dip} a[Y_1,[Y_1,Y_2]]+b[Y_2,[Y_2,Y_1]]=0 \end{equation}
If $a=0$ it becomes $[Y_2,[Y_2,Y_1]]=0 $, whereas if $b=0$ it can be reduced to the same case by exchanging the role of $Y_1$ and $Y_2$.
If $a\neq 0$ and $b \neq 0$, then we perform the change of basis
\begin{equation}\label{replacing}
\begin{cases} Y_1=b \tilde Y_1 \\  Y_2=a \tilde Y_1+\tilde Y_2
\end{cases}
\end{equation}
By replacing (\ref{replacing}) in (\ref{dip}), we obtain $[\tilde Y_2,[\tilde Y_2,\tilde Y_1]]=0 $.
Therefore, up to a change of basis, $[Y_2,[Y_2,Y_1]]=0$.

Now we consider the canonical projection $\mathfrak g \overset{\pi}{\longrightarrow} \mathfrak f $, which is a surjective homogeneous homomorphism.
Hence, there exist two independent vectors $X_1, X_2$ in $V_1$ such that $\pi(X_1)=Y_1$ and $\pi(X_2)=Y_2$. We make a basis for $V_1$ that contains $X_1$ and $X_2$ and such that its other elements $X_i$ with $i=3,\ldots,m$ are taken from a basis of $\mathrm{Ker}\,\pi_{|V_{1}}$. This can be done because of linear algebra arguments applied to the surjective linear function $\pi_{|V_{1}}:V_1\rightarrow \tilde{V}_1$.
 
We have that $$\mathrm{span}(W_3 \setminus \{[X_1,[X_1,X_2]],[X_1,[X_2,X_1]] \})\subsetneq V_3,$$ otherwise $[Y_1,[Y_1,Y_2]]$ would be zero and $\mathfrak f$ would be of step 2, which is a contradiction. 
\end{proof}

%
%

\begin{example}\label{contrex}
Denote by $\mathfrak{f}_{m,\kappa}$ the free Lie algebra of step $\kappa$ with $m$ generators, and  take a free Lie algebra $\mathfrak{f}_{3,\kappa}$ with $ \kappa\geq 3 $ and three generators $X_1,X_2,X_3$. We quotient it by the homogeneous ideal generated by \begin{align} \label{relations} \nonumber \big\{&[X_1,[X_1,X_2]]+[X_1,[X_1,X_3]],\,[X_1,[X_1,X_3]]+[X_2,[X_2,X_1]], \\ 
&[X_2,[X_2,X_1]]+[X_2,[X_2,X_3]],\,
[X_2,[X_2,X_3]]+[X_3,[X_3,X_1]], \\ \nonumber
&[X_3,[X_3,X_1]]+[X_3,[X_3,X_2]]\big\}, \end{align}
and we denote the obtained stratified quotient algebra by $\mathfrak g$.
We stress that vector fields of (\ref{relations}) are linearly independent, since Jacobi identity is trivial in those cases. 
By recalling Remark \ref{dimensionreason}, we can say that the dimension of the third layer of $\mathfrak{f}_{3,\kappa}$ is $8$, whereas the dimension of the third layer of $ \mathfrak g$ is $3$ because of (\ref{relations}).
In a stratified Lie algebra of type $\star$ with $3$ generators,  the dimension of the third layer is at most $2$, hence the stratified quotient algebra is not of type $\star$.
Now we show by contradiction that (i) of Proposition \ref{quot} does not hold. 

We assume there exists a basis of the first layer of $\mathfrak g$, denoted by $(Y_1,Y_2,Y_3)$, such that $$[Y_1,[Y_1,Y_2]]=-[Y_1,[Y_2,Y_1]]$$ is independent of the other commutators. If we rewrite the vectors that generate the ideal
as linear combinations of commutators of $Y_1,Y_2,Y_3$, we obtain
\begin{align}\nonumber \Big\{&\alpha_{1}[Y_1,[Y_1,Y_2]]+\sum_i \beta_{1i}Z_i,\,\alpha_{2}[Y_1,[Y_1,Y_2]]+\sum_i \beta_{2i}Z_i, \\ \nonumber
&\alpha_{3}[Y_1,[Y_1,Y_2]]+\sum_i \beta_{3i}Z_i,\,
\alpha_{4}[Y_1,[Y_1,Y_2]]+\sum_i \beta_{4i}Z_i, \\
&\alpha_{5}[Y_1,[Y_1,Y_2]]+\sum_i \beta_{5i}Z_i \Big\}, \label{relations2}\end{align}
where $Z_i$ are the remaining commutators of length 3, excluding $[Y_1,[Y_2,Y_1]]$.
We remark that, since our assumptions, \begin{equation}\label{sonozero} \alpha_i=0 \; \textrm{ for } \; i=1,\ldots,5 . \end{equation}
If
\begin{displaymath}
\left(\!
\begin{array}{c}
X_1 \\
X_2 \\
X_3 \\
\end{array}\!\right)=\left(\!
\begin{array}{ccc}
a_{11} & a_{12} & a_{13} \\
a_{21} & a_{22} & a_{23} \\
a_{31} & a_{32} & a_{33} \\
\end{array}\!\right)\left(\!
\begin{array}{c}
Y_1 \\
Y_2 \\
Y_3 \\
\end{array}\!\right)
\end{displaymath}
is the relation between the two basis, we can replace coefficients $\alpha_i$ in (\ref{sonozero}) with their expression and we obtain \begin{displaymath}
\begin{cases}\;\;\:a_{11}(-a_{12} a_{21} + a_{11} a_{22}) + a_{11} (-a_{12} a_{31} + a_{11} a_{32})=0 \\
 -a_{21}(-a_{12} a_{21} + a_{11} a_{22}) + a_{11}(-a_{12} a_{31} + a_{11} a_{32})=0 \\
  -a_{21}(-a_{12} a_{21} + a_{11} a_{22}) + a_{21}(-a_{22} a_{31} + a_{21} a_{32})=0 \\
 -a_{31} (-a_{12} a_{31} + a_{11} a_{32}) + a_{21}(-a_{22} a_{31} + a_{21} a_{32})=0 \\
 -a_{31} (-a_{12} a_{31} + a_{11} a_{32}) - a_{31}(-a_{22} a_{31} + a_{21} a_{32})=0 \; .
\end{cases}
\end{displaymath}
This system of equations leads to the following solutions:
\begin{gather*}
a_{11} = 0 \textrm{ and } a_{21} = 0 \textrm{ and } a_{31} = 0 \\  \textrm{ or } \\
      a_{12} = 0 \textrm{ and } a_{22} = 0 \textrm{ and } a_{32} = 0 \\ \textrm{ or } \\
      a_{11} = 0 \textrm{ and } a_{12} = 0 \textrm{ and } a_{21} = 0  \textrm{ and } a_{22} = 0 \\ \textrm{ or } \\ 
      a_{11} = 0 \textrm{ and } a_{12} = 0 \textrm{ and } a_{31} = 0 \textrm{ and } a_{32} = 0 \\ \textrm{ or } \\
      a_{21} = 0 \textrm{ and } a_{22} = 0 \textrm{ and } a_{31} = 0 \textrm{ and } a_{32} = 0 \\ \textrm{ or } \\
      a_{21} = 0 \textrm{ and } a_{22} = 0 \textrm{ and } a_{11} = \frac{a_{12} a_{31}}{a_{32}} \textrm{ and } 
        a_{32} \neq 0 \\ \textrm{ or }\\
      a_{11} = \frac{a_{12} a_{21}}{a_{22}} \textrm{ and } a_{22} \neq 0 \textrm{ and } a_{31} = 0 \textrm{ and } 
        a_{32} = 0 \\ \textrm{ or } \\
      a_{11} = \frac{a_{12} a_{21}}{a_{22}} \textrm{ and } a_{22} \neq 0 \textrm{ and } 
        a_{21} = \frac{a_{22} a_{31}}{a_{32}} \textrm{ and } a_{32} \neq 0 \\ \textrm{ or } \\
      a_{11} = 0 \textrm{ and } a_{12} =0 \textrm{ and } a_{21} \neq 0 \textrm{ and } a_{21} = \frac{a_{22} a_{31}}{a_{32}} \textrm{ and } 
        a_{32} \neq 0 \;  .     
\end{gather*}

In any of these cases, the change of basis matrix is singular, which is a contradiction.  
\end{example}

\bigskip

We conclude this section with some examples of groups in which the blowup of a set (at a point of its reduced boundary) is not always a halfspace. 
\begin{example}\label{nofree}
Let $\mathbb{G}$ be a free Carnot group of step $\kappa >2$ with $m$ generators $(m \geq 2)$. Then Blow-up Theorem does not hold. 

Theorem 14.1.10 of \cite{lanco_bonfi_ugu} gives a model for its Lie algebra $\mathfrak{g}$ in terms of $m$ generating vector fields with polynomial coefficients on $\mathbb R^n$, where $n$ is the dimension of $\mathfrak{g}$. By Remark 14.1.11 of \cite{lanco_bonfi_ugu}, these $m$ vector fields naturally define a free Carnot group of step $\kappa$ and $m$ generators: more precisely, they are left invariant vector fields of a Carnot group $(\mathbb R^n,\circ,\delta_\lambda)$ that is isomorphic to $\mathbb G$. In general, the coordinate system given by this isomorphism is not the exponential one defined in (\ref{esp}). We denote these generating vector fields with $X_1,X_2,\ldots,X_m$. 

By Theorem 14.1.10 of \cite{lanco_bonfi_ugu}, we have that 
$$ X_1=\frac{\partial}{\partial x_1},\quad X_2=\frac{\partial}{\partial x_2}+\ldots+\frac{x_1^{2}}{2} \frac{\partial}{\partial x_j}+\ldots+a_{2,n}(x_1,x_2,\ldots,x_n) \frac{\partial}{\partial x_n}, $$
where $j$ represent the position of $[[X_2,X_1],X_1]$ in the Hall basis for $\mathfrak{f}_{m,\kappa}$.
Moreover, in $X_3,\!\ldots\!,\!X_m$, the partial derivative $\frac{\partial}{\partial x_j}$ does not appear.

\medskip

Let $E=\{x\in\mathbb G\, :\, f(x) \ge 0\}$, where
$$
f(x_1,\ldots,x_n)=\frac{x_2^3}{3}+2x_j.
$$
We note that $\partial E=\{x\in\mathbb R^4\, :\, f(x) = 0\}$ is a smooth Euclidean
manifold, hence $E$ is a $\mathbb G$-Caccioppoli set (see \cite{K1}, Proposition 2.22).
We stress that $\partial E$ is not a vertical hyperplane of $\mathbb G$.

The horizontal gradient of $f$ is
$
\nabla_{\mathbb G}f(x) = \left(0,x_1^2+x_2^2\right)$ and the intrinsic normal is
$$
\nu_E(x)=-\frac{\nabla_{\mathbb G}f(x)}{|\nabla_{\mathbb G}f(x)|}=(0,-1)$$
for every $x\in\partial E\setminus N$, where $N=\{x\in \mathbb E\,:\,
x_1=x_2=0\}$.
Here we used point $(iii)$ of  Theorem 2.1 (Implicit Function Theorem) in \cite{FSSC dini}.
 Since $|\partial E|_{\mathbb G}(N)=0$, the origin
belongs to $\partial^{*}_{\mathbb G} E$. We note that
$f(\delta_{\lambda}x)=\lambda^3 f(x)$ for $\lambda>0$, hence
$ E_{\lambda,0}=\delta_{\lambda}E=E$. Finally we can conclude that (\ref{blowup_convergence})
is false since $E$ is not a vertical halfspace.
\end{example}

\begin{example}
Let $\mathbb{G}$ be a filiform Carnot group whose Lie algebra is of type
\begin{align*}
&\textrm{span}\{ X_1,X_2 \} \oplus \textrm{span}\{ [X_2,X_1]\} \oplus \textrm{span}\{ [[X_2,X_1],X_1] \} \oplus \cdots \\ &\cdots \oplus \textrm{span} \{[[\cdots[[X_2,\underbrace{X_1],X_1],\cdots],X_1}_{(\kappa-1) \textrm{ times}}] \}
\end{align*}
where $\kappa>2$ and all other independent commutators are identically zero.
Then Blow-up Theorem does not hold. 
A coordinate expression of vector fields $X_1$ and $X_2$ is
$$ X_1=\frac{\partial}{\partial x_1}, \quad X_2=\frac{\partial}{\partial x_2}-x_1\frac{\partial}{\partial x_3}+\frac{x_1^2}{2}\frac{\partial}{\partial x_4}+\ldots+(-1)^{(\kappa-1)} \frac{x_1^{\kappa-1}}{(\kappa-1)!}\frac{\partial}{\partial x_{\kappa+1}}. $$

 Let $E=\{x\in\mathbb G\, :\, f(x) \ge 0\}$, where
$$
f(x_1,\ldots,x_{r+1})=\frac{x_2^3}{3}+2x_4.
$$
Then the proof follows the same argument used in the previous example. 
\end{example}


\end{document}